\title{Some Results on $\mathcal{OL}_\infty$ Structure of Nuclear, Quasidiagonal $C^*$-algebras}
\author{Caleb Eckhardt \thanks{University of Illinois at Urbana-Champign; e-mail: ceckhard@uiuc.edu}}
\newtheorem{theorem}{Theorem}[section]
\newtheorem{lemma}[theorem]{Lemma}
\newtheorem{corollary}[theorem]{Corollary}
\newtheorem{definition}[theorem]{Definition}
\newtheorem{proposition}[theorem]{Proposition}
\newtheorem{example}[theorem]{Example}
\newtheorem{question}[theorem]{Question}
\newtheorem{remark}[theorem]{Remark}
\numberwithin{equation}{section}
\newcommand{\V}{\Vert}
\newcommand{\vp}{\varphi}
\newcommand{\la}{\langle}
\newcommand{\ra}{\rangle}
\begin{document}
 \maketitle
\begin{abstract}
We continue the study of $\mathcal{OL}_\infty$ structure of nuclear $C^*$-algebras initiated by Junge, Ozawa and Ruan.
In particular, we prove if $\mathcal{OL}_\infty(A)<1.005,$ then
$A$ has a separating family of irreducible, stably finite representations.  As an application 
we give examples of nuclear, quasidiagonal $C^*$-algebras $A$ with $\mathcal{OL}_\infty(A)>1.$ 
\end{abstract}
\section{Introduction}
This paper continues the study of $\mathcal{OL}_\infty$-structure of nuclear $C^*$-algebras initiated by Junge, Ozawa and Ruan in \cite{Junge03}. Before describing the contents of this paper, we recall the necessary definitions and results.

Let $V$ and $W$ be $n$-dimensional operator spaces and consider the completely bounded version of Banach-Mazur distance:
\begin{equation*}
 d_{cb}(V,W)=\inf\{\V\vp\V_{cb}\V\vp^{-1}\V_{cb}:\vp:V\rightarrow W\textrm{ is a linear isomorphism}\}.
\end{equation*}
Let $A$ be a $C^*$-algebra.  For  $\lambda>1$ we say that $\mathcal{OL}_\infty(A)\leq\lambda$ if for every finite-dimensional subspace $E\subset A$, there exists a finite-dimensional $C^*$-algebra $B$ and a subspace $E\subset F\subset A$ such that $d_{cb}(F,B)\leq\lambda.$ Then define
\begin{equation*}
\mathcal{OL}_\infty(A)=\inf\{\lambda:\mathcal{OL}_\infty(A)\leq\lambda\}.
\end{equation*}
$A$ is a \emph{rigid} $\mathcal{OL}_\infty$ space if for every $\epsilon>0$ and every $x_1,...,x_n\in A$ there is a finite-dimensional $C^*$-algebra $B$ and a complete isometry $\vp:B\rightarrow A$ such that $dist(x_i,\vp(B))<\epsilon$ for $i=1,..,n.$ 

$\mathcal{OL}_\infty$ is an interesting invariant for $C^*$-algebras, particulaly when one considers the interplay between $\mathcal{OL}_\infty$ and various approximation properties of $C^*$-algebras.

It follows easily from the definition, that if $\mathcal{OL}_\infty(A)<\infty$, then there is a net of matrix algebras $(M_{n_i})$ and linear maps $\alpha_i:A\rightarrow M_{n_i}$, $\beta_i:M_{n_i}\rightarrow A$ such that $\beta_i\alpha_i$ tends to the identity on $A$ pointwise and $\sup_i\V\alpha_i\V_{cb}\V\beta_i\V_{cb}<\infty.$ Pisier showed \cite[Theorem 2.9]{Pisier96} that this implies $A$ is nuclear. Conversely,  it was shown in \cite{Junge03} if $A$ is nuclear, then $\mathcal{OL}_\infty(A)\leq 6.$ This estimate was improved in \cite{Junge04} when the authors showed that all nuclear $C^*$-algebras $A$ have $\mathcal{OL}_\infty(A)\leq3.$ So, $\mathcal{OL}_\infty$ is most useful when restricted to nuclear $C^*$-algebras.

Another important approximation property is quasidiagonality (QD).  We refer the reader to the survey article \cite{Brown04} for information on QD $C^*$-algebras. The following relationships between QD and $\mathcal{OL}_\infty$ were established in \cite{Junge03}:
\begin{equation*}
A \textrm{ is a rigid }\mathcal{OL}_\infty \textrm{ space}\quad\xrightarrow{\textup{(i)}}\quad \mathcal{OL}_\infty(A)=1\quad\xrightarrow{\textup{(ii)}}\quad A\textrm{ is nuclear \& QD}.
\end{equation*}
Blackadar and Kirchberg showed \cite[Proposition 2.5]{Blackadar01}that all 3 of the above assertions are equivalent if $A$ is either simple or both prime and antiliminal.
The main purpose of this paper is to give examples showing that the converse of (ii) does not hold in general.

In Section 2 we prove the necessary technical results used throughout the paper.  Section 3 contains our first counterexamples to (ii). Section 4 contains some results about permenance properties about $\mathcal{OL}_\infty.$  In Section 5 we prove the main result that all unital $C^*$-algebras $A$ with $\mathcal{OL}_\infty(A)<1.005$  have a separating family of irreducible, stably finite representations. This provides  a larger class of nuclear quasidiagonal $C^*$-algebras $A$ with $\mathcal{OL}_\infty(A)>1$, but also has implications for the converse of (i) which we discuss at the end of the paper.
%%%%%%%%%%%%%%%%%%%%%%%%%%%%%%%%%%%%%%%%%%%%%%%%%%%%%%%%%%%%%%%%%%%%%%%%%%%%%%%%%%%%%%%%%%%%%%%%%%%%%%%%%%%%%%%%%%%%%%%%%%%%%%%%%%%
%%%%%%%%%%%%%%%%%%%%%%%%%%%%%%%%%%%%%%% SECTION 2 ``TECHNICAL LEMMAS''            %%%%%%%%%%%%%%%%%%%%%%%%%%%%%%%%%%%%%%%%%%%%%%%%%
%%%%%%%%%%%%%%%%%%%%%%%%%%%%%%%%%%%%%%%%%%%%%%%%%%%%%%%%%%%%%%%%%%%%%%%%%%%%%%%%%%%%%%%%%%%%%%%%%%%%%%%%%%%%%%%%%%%%%%%%%%%%%%%%%%%
\section{Technical lemmas}
In this section, we gather some technical lemmas needed for sections 3 and 4, and fix our notation. 

Throughout the paper, if $H$ is a Hilbert space, we let
$B(H)$ denote the space of bounded linear operators on $H.$ For $H$ $n$-dimensional we write  $\ell^2(n),$ and $M_n$ for $B(\ell^2(n)).$   We write ucp and cpc as shorthand for ``unital completely positive'' and ``completely positive contraction'' respectively. For linear maps $\vp:V\rightarrow W$ between operator spaces we write
$\vp^{(n)}$ for $id_{M_n}\otimes\vp:M_n(V)\rightarrow M_n(W),$ and $\V\vp\V_{cb}=\sup_n\V\vp^{(n)}\V.$ Furthermore if $\vp$ is injective, we write $\V\vp^{-1}\V$ for the norm of the map $\vp^{-1}:\vp(V)\rightarrow V.$
 We write $\otimes$ for the minimal tensor product of $C^*$-algebras.

The following lemma is implicit in the proof of \cite[Theorem 3.2]{Junge03}.
%%%%%%%%%%%%%%%%%%%%%%%%%%%%%%%%%%%%%%%%%%%%%%%%%%%%%%%%%%%%%%%%%%%%%%%%%%%%%%%%%%%%%%%%%%%%%%%%%%%%%%%%%%%%%%%%%%%%%%%%%%%%%%%%%%%
%%%%%%%%%%%%%%%%%%%%%%%%%%%%%%%%%%%%%%% JOR's USEFUL CHARACTERIZATION OF OL_infty %%%%%%%%%%%%%%%%%%%%%%%%%%%%%%%%%%%%%%%%%%%%%%%%%
%%%%%%%%%%%%%%%%%%%%%%%%%%%%%%%%%%%%%%%%%%%%%%%%%%%%%%%%%%%%%%%%%%%%%%%%%%%%%%%%%%%%%%%%%%%%%%%%%%%%%%%%%%%%%%%%%%%%%%%%%%%%%%%%%%%
\begin{lemma} \label{lem:Junge03.T3.2} 
Let $0<\delta<1/\sqrt{2},$ and let $A$ be a unital $C^*$-algebra with $\mathcal{OL}_\infty(A)<1+\delta^2/2.$ Let $F\subset A$ be a finite subset.  Then there is a finite-dimensional $C^*$-algebra $B$, a linear map $\vp:B\rightarrow A$ with $||\vp||_{cb}<1+\delta^2/2$
and a ucp map $\psi:A\rightarrow B$ such that $F\subset\vp(B)$ and 
\begin{equation*}
\V\psi\vp-id_B\V_{cb}<(1+\delta^2/2)\sqrt{2(\delta^2+\delta^4/4)}. 
\end{equation*}
\end{lemma}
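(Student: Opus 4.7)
The plan is to unpack the $\mathcal{OL}_\infty$-hypothesis into a finite-dimensional $C^*$-algebra $B$ together with a near-completely isometric linear embedding $\vp : B \to A$ whose image contains $F$, extend the inverse of that embedding to all of $A$ by injectivity of $B$, and then perturb the extension into a ucp map, controlling the error in terms of how far $\V\vp\V_{cb}$ exceeds $1$.

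Apply the definition of $\mathcal{OL}_\infty(A) < 1+\delta^2/2$ to the finite-dimensional subspace $E = \textup{span}(F \cup \{1_A\})$. This produces a finite-dimensional $C^*$-algebra $B$, a subspace $E \subset F' \subset A$, and a linear isomorphism $\vp_0 : B \to F'$ with $\V\vp_0\V_{cb}\V\vp_0^{-1}\V_{cb} < 1+\delta^2/2$. After rescaling we may assume $\V\vp_0^{-1}\V_{cb} \le 1$ and $\V\vp_0\V_{cb} < 1+\delta^2/2$. Let $\vp : B \to A$ be $\vp_0$ composed with the inclusion $F' \hookrightarrow A$, so that $F \subset \vp(B)$ and $\V\vp\V_{cb} < 1+\delta^2/2$. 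Finite-dimensional $C^*$-algebras are injective operator spaces, so Wittstock's extension theorem extends the completely contractive map $\vp_0^{-1} : F' \to B$ to a completely contractive $\tilde\psi : A \to B$, for which $\tilde\psi\vp = id_B$ on the nose.

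It remains to convert $\tilde\psi$ into a ucp map $\psi : A \to B$ close to it. The failure of $\tilde\psi$ to be ucp is measured by the slack
\[
\V\vp\V_{cb}^2 - 1 < \left(1+\tfrac{\delta^2}{2}\right)^2 - 1 = \delta^2 + \tfrac{\delta^4}{4}.
\]
A Haagerup/Paulsen-type perturbation argument, of the kind implicit in the proof of \cite[Theorem~3.2]{Junge03}, furnishes a ucp $\psi$ satisfying $\V\psi - \tilde\psi\V_{cb} < \sqrt{2(\delta^2 + \delta^4/4)}$. Since $\tilde\psi\vp = id_B$, we conclude
\[
\V\psi\vp - id_B\V_{cb} = \V(\psi - \tilde\psi)\vp\V_{cb} \le \V\psi - \tilde\psi\V_{cb}\,\V\vp\V_{cb} < \left(1+\tfrac{\delta^2}{2}\right)\sqrt{2\left(\delta^2 + \tfrac{\delta^4}{4}\right)}.
\]

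The principal obstacle is the passage from the cc extension $\tilde\psi$ to the ucp approximant $\psi$ with the stated sharp constant. This rests on the familiar trick of realising $\tilde\psi$ as a corner of a $2\times 2$ operator-system dilation which is cp on the enlarged system, compressing back to a ucp map, and then bounding the self-adjoint and positive defects in terms of $\V\vp\V_{cb}^2 - 1$. The identity $(1+\delta^2/2)^2 - 1 = \delta^2 + \delta^4/4$ reveals the stated quantity as precisely $\V\vp\V_{cb}\sqrt{2(\V\vp\V_{cb}^2 - 1)}$, and tracking the constants through this perturbation is the only delicate point.
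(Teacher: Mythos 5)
The paper gives no proof of this lemma --- it is quoted as ``implicit in the proof of \cite[Theorem 3.2]{Junge03}'' --- so your proposal has to be judged against that argument. Your skeleton is the right one (extract $B$ and a near-complete isometry from the definition of $\mathcal{OL}_\infty$, normalize, extend $\vp_0^{-1}$ by Wittstock using injectivity of the finite-dimensional $C^*$-algebra, then trade the extension for a ucp map), and your bookkeeping $\V\psi\vp-id_B\V_{cb}\leq\V\psi-\tilde\psi\V_{cb}\V\vp\V_{cb}$ correctly reduces everything to the perturbation step. But that step --- the one you yourself call the only delicate point --- is a genuine gap, and the intermediate claim you assert there is false as stated. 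The completely contractive extension $\tilde\psi$ of $\vp_0^{-1}$ need not be anywhere near a ucp map, no matter how close $\V\vp_0\V_{cb}\V\vp_0^{-1}\V_{cb}$ is to $1$. Concretely, take $B=\mathbb{C}$, $F'=\mathbb{C}1_A$ and $\vp_0(z)=-z1_A$: this is a complete isometry, so your ``slack'' $\V\vp\V_{cb}^2-1$ vanishes, yet $\tilde\psi=-\omega$ for some state $\omega$, while every ucp map $A\to\mathbb{C}$ is a state and hence at distance at least $2$ from $\tilde\psi$. Nothing in your setup controls $\V\tilde\psi(1_A)-1_B\V$, and the Paulsen $2\times2$ dilation you invoke does not see this defect: it reproduces $\tilde\psi$ exactly in the off-diagonal corner, but the resulting compression is not unital, and the distance from $\tilde\psi$ to the ucp maps is governed precisely by the unitality defect you never estimate.

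The nontrivial content of the lemma --- the reason it is ``implicit in'' rather than ``an immediate consequence of'' the definition --- is exactly the correction of the given isomorphism before extending. One must show that $b_0=\vp_0^{-1}(1_A)$ is, up to an error controlled by $\lambda^2-1$ (and this uses the two-sided cb estimates on $\vp_0$ at the matrix level, not merely $\lambda^{-1}\leq\V b_0\V\leq 1$), close to a unitary of $B$; replace $\vp_0$ by $b\mapsto\vp_0(ub)$ for the correcting unitary $u$ so that the extension of the new inverse is almost unital and almost self-adjoint; and only then apply a perturbation of the type of Lemma \ref{lem:Kirchperb} to produce the ucp map $\psi$, accepting that $\psi\vp$ is then only approximately, not exactly, $id_B$ --- which is why the lemma's conclusion is an estimate on $\V\psi\vp-id_B\V_{cb}$ rather than an identity. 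Appealing at this point to ``a perturbation argument of the kind implicit in the proof of \cite[Theorem 3.2]{Junge03}'' is circular, since that perturbation argument is precisely the statement being proved.
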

In \cite[Theorem 3.2]{Junge03} the authors require $\delta<1/16.$  The reason for this is to guarantee that $\psi$ is approximately multiplicative on $F.$  We will not need approximate multiplicativity in this paper, which is why we are able to relax this condition to $\delta<1/\sqrt{2}.$

We need the following slight variation of  Lemma \ref{lem:Junge03.T3.2}.

%%%%%%%%%%%%%%%%%%%%%%%%%%%%%%%%%%%%%%%%%%%%%%%%%%%%%%%%%%%%%%%%%%%%%%%%%%%%%%%%%%%%%%%%%%%%%%%%%%%%%%%%%%%%%%%%%%%%%%%%%%%%%%%%%%%
%%%%%%%%%%%%%%%%%%%%%%%%%%%%%%%%%%%%%%% COPYCAT LEMMA OF JOR                            %%%%%%%%%%%%%%%%%%%%%%%%%%%%%%%%%%%%%%%%%%
%%%%%%%%%%%%%%%%%%%%%%%%%%%%%%%%%%%%%%%%%%%%%%%%%%%%%%%%%%%%%%%%%%%%%%%%%%%%%%%%%%%%%%%%%%%%%%%%%%%%%%%%%%%%%%%%%%%%%%%%%%%%%%%%%%%

\begin{lemma} \label{lem:L2.1} Let $\lambda<(\frac{1+\sqrt{3}}{2})^{1/2}$ and A be a unital $C^*$-algebra with $\mathcal{OL}_\infty(A)<\lambda.$
 Let $F\subset A$ be a finite subset. Then there is a finite-dimensional $C^*$-algebra B, a ucp map $\psi:A\rightarrow B$ and a unital,
self-adjoint map $\vp:B\rightarrow A$ such that
\begin{enumerate}
 \item[\textup{(i)}] $\V\vp\V_{cb}<\frac{\lambda}{1-\lambda\sqrt{2(\lambda^2-1)}}$
 \item[\textup{(ii)}] $F\subset\vp(B).$
 \item[\textup{(iii)}] $\psi\vp=id_B.$
 \item[\textup{(iv)}] $\vp\psi|_F=id_F.$
\end{enumerate}
\end{lemma}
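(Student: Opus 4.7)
The strategy is to apply Lemma \ref{lem:Junge03.T3.2} to produce an approximate right-inverse $\vp_0$ of $\psi$, correct it by inverting $\psi\vp_0$ on the finite-dimensional $B$, and then symmetrize to enforce self-adjointness and unitality. As a preliminary reduction I would enlarge $F$ to $F \cup F^* \cup \{1_A\}$; any map satisfying (i)--(iv) for the larger set satisfies them for the original $F$ as well.

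For the parameter I would set $\delta = \sqrt{2(\lambda-1)}$ so that $1+\delta^2/2 = \lambda$. The hypothesis $\lambda < \bigl(\tfrac{1+\sqrt 3}{2}\bigr)^{1/2} < 5/4$ gives $\delta < 1/\sqrt2$, and a short calculation shows $\delta^2+\delta^4/4 = \lambda^2-1$. Lemma \ref{lem:Junge03.T3.2} then produces a finite-dimensional $B$, a ucp map $\psi\colon A \to B$, and $\vp_0\colon B \to A$ with $\|\vp_0\|_{cb} < \lambda$, $F \subset \vp_0(B)$, and
\[ \eta := \|\psi\vp_0 - id_B\|_{cb} < \lambda\sqrt{2(\lambda^2-1)}. \]
The key observation is that $\lambda^2 < (1+\sqrt 3)/2$ is equivalent to $2\lambda^4 - 2\lambda^2 - 1 < 0$, i.e.\ $\lambda\sqrt{2(\lambda^2-1)} < 1$. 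Hence $\eta < 1$, and on the finite-dimensional $B$ the map $\psi\vp_0$ is invertible with $\|(\psi\vp_0)^{-1}\|_{cb} \le 1/(1-\eta)$ by a Neumann series argument. Setting $\vp_1 := \vp_0(\psi\vp_0)^{-1}$ gives $\psi\vp_1 = id_B$, $\vp_1(B) = \vp_0(B) \supset F$, and the cb-norm bound required by (i).

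To upgrade $\vp_1$ to a self-adjoint unital map I would symmetrize: $\vp(b) := \tfrac12\bigl(\vp_1(b) + \vp_1(b^*)^*\bigr)$. The resulting $\vp$ is manifestly self-adjoint and satisfies $\|\vp\|_{cb} \le \|\vp_1\|_{cb}$. Because $\psi$ is ucp, and hence self-adjoint, one checks that $\psi\vp = id_B$ is preserved. For $f \in F$ (self-adjoint by the reduction), putting $b := \psi(f)$, the identity $\psi\vp_1 = id_B$ combined with $f \in \vp_1(B)$ forces $\vp_1(b) = f$, and similarly $\vp_1(b^*) = f^*$; hence $\vp(b) = f$. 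This simultaneously yields (ii) $F \subset \vp(B)$ and (iv) $\vp\psi|_F = id_F$, and taking $f = 1_A$ shows $\vp(1_B) = 1_A$.

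The main obstacle is really just the parameter calibration in step two: one has to recognize that the slightly mysterious constant $\bigl(\tfrac{1+\sqrt 3}{2}\bigr)^{1/2}$ is precisely the threshold at which the error $\lambda\sqrt{2(\lambda^2-1)}$ coming from Lemma \ref{lem:Junge03.T3.2} drops below $1$, so that the Neumann-series inversion is legal. Everything thereafter --- inverting a near-identity on a finite-dimensional space and the two-line symmetrization --- is routine.
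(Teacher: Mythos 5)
Your proposal is correct and follows essentially the same route as the paper: the same calibration $\delta=\sqrt{2(\lambda-1)}$ feeding Lemma \ref{lem:Junge03.T3.2}, the Neumann-series inversion of $\psi\vp_0$ on the finite-dimensional $B$, and the symmetrization $b\mapsto\tfrac12(\vp_1(b)+\vp_1(b^*)^*)$. The only deviation is cosmetic --- the paper reduces to $F$ consisting of positive elements while you close $F$ under adjoints and adjoin $1_A$ (which in fact handles unitality more cleanly); just note that your parenthetical ``self-adjoint by the reduction'' should say that $F$ is $*$-closed as a set, not that each $f$ is self-adjoint, though your argument only ever uses $f^*\in\vp_1(B)$ and so goes through unchanged.
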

\begin{proof}  Without loss of generality suppose $F$ consists of positive elements. We apply Lemma \ref{lem:Junge03.T3.2} with $\lambda=1+\delta^2/2$ to obtain a finite-dimensional $C^*$-algebra $B$, a ucp map
$\psi:A\rightarrow B$, and a linear map $\vp:B\rightarrow A$ such that $F\subset\vp(B),$ $\V\vp\V_{cb}<\lambda$ and 
$\V\psi\vp-id_B\V_{cb}<\lambda\sqrt{2(\lambda^2-1)}<1.$

Then $\psi\vp$ is invertible in the Banach algebra of all completely bounded maps on $B.$ Let $\vp'=\vp(\psi\vp)^{-1}.$ Then
\begin{equation*}
 \V\vp'\V_{cb}\leq \V\vp\V_{cb}\V(\psi\vp)^{-1}\V_{cb}\leq \lambda\frac{1}{1-\lambda\sqrt{2(\lambda^2-1)}}.
\end{equation*}
Then $\vp'$ satisfies (i)-(iii).
Moreover, since $\psi$ is unital and $\psi\vp'=id_B,$ it follows that $\vp'$ is unital.  

Finally, let $\vp''(x)=1/2(\vp'(x)+\vp'(x^*)^*),$ for $x\in B.$ 
Then $\vp''$ is unital, self-adjoint and $\V\vp''\V_{cb}\leq\V\vp'\V_{cb}$. Since $\psi$ is positive, it follows that
$\psi\vp''=id_B.$  To see (ii), let $b\in B$ such that $\vp'(b)\in F.$  Since $F$ consists of positive elements, 
$b=\psi\vp'(b)\geq0.$ Hence, $\vp''(b)=1/2(\vp'(b)+\vp'(b)^*)=\vp'(b)\in F.$ Condition (iv) is a consequence of (ii) and (iii).
\end{proof}
%%%%%%%%%%%%%%%%%%%%%%%%%%%%%%%%%%%%%%%%%%%%%%%%%%%%%%%%%%%%%%%%%%%%%%%%%%%%%%%%%%%%%%%%%%%%%%%%%%%%%%%%%%%%%%%
%%%%%%%%%%%%%%%%%%%%%%%%%%%%%%%%%%%%%%% LEMMA DEFINING ``x_1''           %%%%%%%%%%%%%%%%%%%%%%%%%%%%%%%%%%%%%%
%%%%%%%%%%%%%%%%%%%%%%%%%%%%%%%%%%%%%%%%%%%%%%%%%%%%%%%%%%%%%%%%%%%%%%%%%%%%%%%%%%%%%%%%%%%%%%%%%%%%%%%%%%%%%%%
\begin{lemma} \label{lem:x1}
 Let A be a unital $C^*$-algebra and let $x\in A.$ Set
\begin{equation} \label{eq:x1def}
 x_1=\left[ \begin{array}{cc} \V x\V 1 & x\\ x^* & \V x\V 1 \end{array} \right]\in M_2\otimes A.
\end{equation}
Then $\V x_1\V=2\V x\V.$
\end{lemma}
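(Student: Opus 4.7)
The plan is to decompose $x_1 = \V x\V\cdot 1_{M_2(A)} + y$ where
\[
y = \begin{pmatrix} 0 & x \\ x^* & 0 \end{pmatrix} \in M_2\otimes A,
\]
and analyze $y$ using self-adjointness together with a $\mathbb{Z}/2$-symmetry of its spectrum. Observe that $y$ is self-adjoint and $y^2 = \mathrm{diag}(xx^*, x^*x)$ by direct block multiplication, so $\V y\V^2 = \V y^2\V = \max(\V xx^*\V,\V x^*x\V) = \V x\V^2$. The triangle inequality then gives the upper bound $\V x_1\V \leq \V x\V + \V y\V = 2\V x\V$.

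For the matching lower bound, I would exploit the symmetry of $y$: conjugation by the self-adjoint unitary $u = \mathrm{diag}(1,-1)\otimes 1_A \in M_2\otimes A$ sends $y$ to $-y$. Hence $\sigma(y) = -\sigma(y)$. Since $y$ is self-adjoint with $\V y\V = \V x\V$, at least one of $\pm\V x\V$ lies in $\sigma(y)$, and by the reflection symmetry both do. Consequently $2\V x\V \in \V x\V + \sigma(y) = \sigma(x_1)$, which yields $\V x_1\V \geq 2\V x\V$, and combining the two bounds gives equality.

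I do not anticipate any serious obstacle, since the statement reduces to a spectral computation for a self-adjoint $2\times 2$ block element. The only point warranting a little care is ensuring that $+\V x\V$ (not merely $\pm\V x\V$) lies in $\sigma(y)$, which the spectral symmetry handles cleanly. As an alternative, one could faithfully represent $A$ on a Hilbert space $H$ and build a near-maximizing unit vector of the form $\tfrac{1}{\sqrt{2}}(\xi,\, x\xi/\V x\xi\V) \in H\oplus H$ for suitable $\xi$, but the spectral-symmetry approach avoids any estimation and is the route I would take.
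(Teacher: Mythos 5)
Your argument is correct, and it takes a genuinely different route from the paper's. You work algebraically: writing $x_1=\V x\V 1+y$ with $y$ the off-diagonal part, computing $\V y\V=\V x\V$ from $y^2=\mathrm{diag}(xx^*,x^*x)$, and then using the grading unitary $u=\mathrm{diag}(1,-1)\otimes 1_A$ to see that $\sigma(y)=-\sigma(y)$, so that both $\pm\V x\V$ lie in $\sigma(y)$ and hence $2\V x\V\in\sigma(x_1)$. Every step is sound (spectral permanence takes care of any worry about which unital algebra the spectrum is computed in, and the self-adjointness of $x_1$ converts $2\V x\V\in\sigma(x_1)$ into the lower bound on the norm). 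The paper instead normalizes $\V x\V=1$, represents $A$ faithfully on a Hilbert space, takes approximate eigenvectors $\eta_k$ with $x^*x\eta_k-\eta_k\to 0$, and checks directly that the unit vectors $\tfrac{1}{\sqrt2}(x\eta_k,\eta_k)$ are nearly norming for $x_1$ --- essentially the ``alternative'' you mention and set aside. Your spectral-symmetry proof is cleaner in that it needs no representation, no approximate eigenvectors, and no limiting estimate; the paper's vector computation is more hands-on and foreshadows the style of estimate used in Lemma 2.4, but buys nothing extra here. Either proof is complete.
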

\begin{proof}
 Without loss of generality, assume that $\V x\V=1.$ Clearly $\V x_1\V\leq 2.$ For the reverse inequality, suppose that $A\subset B(H)$ unitally for some Hilbert space $H.$ By spectral theory there is a sequence of unit vectors $(\eta_k)\subset H$ such that
\begin{equation}\label{eq:approxeigen}
 \lim_{k\rightarrow\infty}\V x^*x\eta_k-\eta_k\V= 0 
\end{equation}
For each $k\in\mathbb{N}$ set
\begin{equation*}
\xi_k =\frac{1}{\sqrt{2}}\left( \begin{array}{l} x\eta_k\\ \eta_k \\ \end{array} \right) \in H\oplus H.
\end{equation*}
Then $\V\xi_k\V\leq1$ and by (\ref{eq:approxeigen}), it follows that
\begin{equation*}
 \lim_{k\rightarrow\infty} \V x_1\xi_k\V=\lim_{k\rightarrow\infty}\frac{1}{\sqrt{2}}\Big\V\left( \begin{array}{c} 2x\eta_k\\ x^*x\eta_k+\eta_k \\ \end{array} \right)\Big\V=2.
\end{equation*}
Hence, $||x_1||\geq2.$ 
\end{proof}
%%%%%%%%%%%%%%%%%%%%%%%%%%%%%%%%%%%%%%%%%%%%%%%%%%%%%%%%%%%%%%%%%%%%%%%%%%%%%%%%%%%%%%%%%%%%%%%%%%%%%%%%%%%%%%
%%%%%%%%%%%%%%%%%%%%%%%%%%%%%%%%%%%%%%% LEMMA ON SUFFICENCY OF CHECKING POSITIVE ELEMENTS  %%%%%%%%%%%%%%%%%%%
%%%%%%%%%%%%%%%%%%%%%%%%%%%%%%%%%%%%%%%%%%%%%%%%%%%%%%%%%%%%%%%%%%%%%%%%%%%%%%%%%%%%%%%%%%%%%%%%%%%%%%%%%%%%%%
\begin{lemma} \label{lem:L2.1.1} Let A and B be $C^*$-algebras with $A$ unital, and $1/2<r\leq1.$ Let $\vp:A\rightarrow B$ be a cpc such that for every
$k\in\mathbb{N}$ and $a\in M_k\otimes A$ with $a\geq0$, $\V\vp^{(k)}(a)\V\geq r\V a\V.$  Then $\vp$ is injective with $\V\vp^{-1}\V_{cb}\leq (2r-1)^{-1}.$
\end{lemma}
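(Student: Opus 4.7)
The plan is to reduce the positivity hypothesis to an estimate on arbitrary elements by embedding any $x$ into a positive element to which the hypothesis applies directly. Fix $k \in \mathbb{N}$ and $x \in M_k \otimes A$. Applying the construction of Lemma~\ref{lem:x1} with $A$ replaced by the unital $C^*$-algebra $M_k \otimes A$, I would form
\[
x_1 = \begin{pmatrix} \V x\V\, 1 & x \\ x^* & \V x\V\, 1 \end{pmatrix} \in M_2 \otimes M_k \otimes A \cong M_{2k} \otimes A,
\]
for which Lemma~\ref{lem:x1} gives $\V x_1\V = 2\V x\V$. The first step is to verify the crucial fact that $x_1$ is not merely self-adjoint but positive; this is a direct Schur complement computation since $x^*x \leq \V x\V^2 \cdot 1$.

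Next I would apply the hypothesis to the positive element $x_1$ at matrix level $2k$, obtaining
\[
\V\vp^{(2k)}(x_1)\V \geq r\V x_1\V = 2r\V x\V.
\]
The matrix $\vp^{(2k)}(x_1)$ has diagonal blocks equal to $\V x\V\,\vp^{(k)}(1)$ and off-diagonal blocks $\vp^{(k)}(x)$ and $\vp^{(k)}(x)^*$. Splitting this into its diagonal and off-diagonal parts and applying the triangle inequality, together with $\V\vp(1)\V \leq 1$ (since $\vp$ is a cpc), yields the upper bound
\[
\V\vp^{(2k)}(x_1)\V \leq \V x\V\,\V\vp(1)\V + \V\vp^{(k)}(x)\V \leq \V x\V + \V\vp^{(k)}(x)\V.
\]

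Combining the two bounds gives $\V\vp^{(k)}(x)\V \geq (2r-1)\V x\V$ for every $k$ and every $x \in M_k \otimes A$. Since $r > 1/2$, this estimate is nontrivial, so each amplification $\vp^{(k)}$ is injective with $(\vp^{(k)})^{-1}$ of norm at most $(2r-1)^{-1}$ on its image; taking the supremum over $k$ yields $\V\vp^{-1}\V_{cb} \leq (2r-1)^{-1}$. The main substantive point beyond the routine computation is recognizing that $x_1$ is positive rather than just self-adjoint---this is exactly what allows the transfer from the positive hypothesis to the complete injectivity bound, and it is why the factor $2$ from Lemma~\ref{lem:x1} produces the sharp constant $2r-1$.
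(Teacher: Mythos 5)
Your proof is correct and takes essentially the same route as the paper: apply the hypothesis to the positive element $x_1$ from Lemma \ref{lem:x1} at matrix level $2k$, and compare the resulting lower bound $2r\V x\V$ with the triangle-inequality upper bound $\V x\V+\V\vp^{(k)}(x)\V$ to get $\V\vp^{(k)}(x)\V\geq(2r-1)\V x\V$. The only cosmetic difference is that you split $\vp^{(2k)}(x_1)$ directly into its diagonal and off-diagonal parts, while the paper first dominates the diagonal blocks $\V x\V\vp^{(k)}(1)$ by $\V x\V 1$ before applying the triangle inequality; both yield the same constant.
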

\begin{proof} Let $n\in\mathbb{N}$ and $x\in M_n\otimes A.$  Let $x_1\in M_2\otimes(M_n \otimes A)$ be as in Lemma \ref{lem:x1}.
Then, $x_1\geq0$ and $\V x_1\V=2\V x\V.$ By assumption, we have
\begin{align*}
2r\V x\V&\leq\V\vp^{(2n)}(x_1)\V\\
&= \Big\V\left[ \begin{array}{cc} \V x\V\vp^{(n)}(1) & \vp^{(n)}(x)\\ \vp^{(n)}(x)^* & \V x\V\vp^{(n)}(1) \end{array} \right]\Big\V\\&\leq \Big\V\left[ \begin{array}{cc} \V x\V 1 & \vp^{(n)}(x)\\ \vp^{(n)}(x)^* & \V x\V 1 \end{array} \right]\Big\V\\
&\leq\V x\V+\V \vp^{(n)}(x)\V.
\end{align*}
Hence $\V\vp^{(n)}(x)\V\geq (2r-1)\V x\V,$ from which we conclude that 
\begin{equation*}
\V\vp^{-1}\V_{cb}\leq (2r-1)^{-1}.
\end{equation*}
\end{proof}

We recall Choi's Schwarz inequality for completely positive maps from   \cite{Choi74}
%%%%%%%%%%%%%%%%%%%%%%%%%%%%%%%%%%%%%%%%%%%%%%%%%%%%%%%%%%%%%%%%%%%%%%%%%%%%%%%%%%%%%%%%%%%%%%%%%%%%%%
%%%%%%%%%%%%%%%%%%%%%%%%%%%%%%%%%%%%%%% CHOI-SCHWARZ %%%%%%%%%%%%%%%%%%%%%%%%%%%%%%%%%%%%%%%%%%%%%%%%%
%%%%%%%%%%%%%%%%%%%%%%%%%%%%%%%%%%%%%%%%%%%%%%%%%%%%%%%%%%%%%%%%%%%%%%%%%%%%%%%%%%%%%%%%%%%%%%%%%%%%%%
\begin{lemma} \label{lem:L2.3} Let $A$ and $B$ be unital $C^*$-algebras and $\psi:A\rightarrow B$ a ucp map.  Then for every $a\in A,$ we have
$\psi(a)^*\psi(a)\leq\psi(a^*a).$
\end{lemma}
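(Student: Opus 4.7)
The plan is to recall the classical proof of this inequality via the $2 \times 2$ matrix trick combined with a Schur complement observation, which avoids invoking Stinespring's dilation theorem directly.

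First, I would fix $a \in A$ and consider the element
\begin{equation*}
M = \left[\begin{array}{cc} 1 & a \\ a^* & a^*a \end{array}\right] \in M_2 \otimes A.
\end{equation*}
The key observation is that $M$ factors as $v^* v$ with $v = [1\ \ a] \in M_{1,2}(A)$, and is therefore positive. This is the analogue, in a unital $C^*$-algebra, of the elementary fact that $\left(\begin{smallmatrix} 1 & \alpha \\ \bar\alpha & |\alpha|^2 \end{smallmatrix}\right)$ is a rank-one positive matrix in $M_2(\mathbb{C})$.

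Next, I would apply $\psi^{(2)} = id_{M_2} \otimes \psi$, which is positive since $\psi$ is completely positive. Using that $\psi$ is unital and self-adjoint, this yields
\begin{equation*}
\psi^{(2)}(M) = \left[\begin{array}{cc} 1 & \psi(a) \\ \psi(a)^* & \psi(a^*a) \end{array}\right] \geq 0 \text{ in } M_2 \otimes B.
\end{equation*}

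Finally, I would deduce the inequality $\psi(a)^*\psi(a) \leq \psi(a^*a)$ from the following general fact: if $\left[\begin{smallmatrix} 1 & X \\ X^* & Y \end{smallmatrix}\right] \geq 0$ in $M_2 \otimes B$ (with $B$ unital), then $Y \geq X^*X$. To justify this (the main technical point I expect to spend the most care on), I would compress by the vector-like element $\left[\begin{smallmatrix} -X \\ 1 \end{smallmatrix}\right]$: multiplying the positive matrix on the left by $[-X^*\ \ 1]$ and on the right by its adjoint produces $Y - X^*X - X^*X + X^*X = Y - X^*X \geq 0$. Substituting $X = \psi(a)$ and $Y = \psi(a^*a)$ gives the desired inequality and completes the proof.
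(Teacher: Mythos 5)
Your proof is correct. Note that the paper does not actually prove this lemma; it simply quotes it as Choi's Schwarz inequality and cites \cite{Choi74}, so there is no argument in the text to compare against. Your route is the standard one and essentially Choi's original: the positivity of $M=v^*v$ with $v=[1\ \ a]$, the application of $\psi^{(2)}$, and the compression of the resulting positive matrix by $\bigl[\begin{smallmatrix}-\psi(a)\\ 1\end{smallmatrix}\bigr]$ (which is legitimate because $w^*Pw=(Qw)^*(Qw)\geq 0$ whenever $P=Q^*Q\geq 0$) all check out, and the algebra $Y-X^*X\geq 0$ is computed correctly. One small remark on what your argument buys: it only ever invokes positivity of $\psi^{(2)}$, so it establishes the inequality for all $2$-positive unital maps, not just completely positive ones --- strictly more than the lemma as stated requires, and with no appeal to Stinespring dilation.
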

%%%%%%%%%%%%%%%%%%%%%%%%%%%%%%%%%%%%%%%%%%%%%%%%%%%%%%%%%%%%%%%%%%%%%%%%%%%%%%%%%%%%%%%%%%%%%%%%%%%%%%%%%%%%
%%%%%%%%%%%%%%%%% MAIN TECHNICAL LEMMA ON ``SPLITTING'' ALMOST C.I.   %%%%%%%%%%%%%%%%%%%%%%%%%%%%%%%%%%%%%%
%%%%%%%%%%%%%%%%%%%%%%%%%%%%%%%%%%%%%%%%%%%%%%%%%%%%%%%%%%%%%%%%%%%%%%%%%%%%%%%%%%%%%%%%%%%%%%%%%%%%%%%%%%%% 
\begin{lemma} \label{lem:L2.2}
Let $L_1$ and $L_2$ be Hilbert spaces and $n\in\mathbb{N}$.  Let $\vp:M_n\rightarrow B(L_1)\oplus B(L_2)$ be an injective cpc
with $\V\vp^{-1}\V_{cb}=r^{-1}<2/(\sqrt{6}-1).$  Let $\vp_i:M_n\rightarrow B(L_i)$ denote the coordinate maps of $\vp$ for $i=1,2.$
Suppose there is a $k\in\mathbb{N}$ and $a\in M_k\otimes M_n$ of norm 1 and $a\geq0$ such that 
\begin{equation*}
\V\vp_2^{(k)}(a)\V=s<(r^2+r-1)/r.
\end{equation*}
Then $\vp_1$ is injective and 
%%%%%%%%%%%%%%%%%%%%%%%%%%%%%% EQUATION 2.2.1 %%%%%%%%%%%%%%%%%%%%%%%%%%%%%%%%%%%%%%%%
\begin{equation}
\V\vp_1^{-1}\V_{cb}\leq\Big(r-\frac{1-r^2}{1-s}\Big)^{-1}. \label{eq:L2.2.1}
\end{equation}
\end{lemma}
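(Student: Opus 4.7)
The plan is to reduce the statement to Lemma~\ref{lem:L2.1.1} by proving a pointwise lower bound $\|\varphi_1^{(j)}(b)\|\geq \rho\,\|b\|$ for every positive $b\in M_j(M_n)$ (and every $j$), where
\[
\rho \;:=\; \frac{(1+r)(r-s)}{2(1-s)}.
\]
A direct algebraic manipulation shows $2\rho-1=r-(1-r^2)/(1-s)$, and the hypothesis $s<(r^2+r-1)/r$ rearranges exactly to $r(r+1-s)>1$, which is equivalent to $\rho>1/2$. Thus Lemma~\ref{lem:L2.1.1} (applied to the cpc $\varphi_1$) would deliver $\|\varphi_1^{-1}\|_{cb}\leq (2\rho-1)^{-1}$, matching \eqref{eq:L2.2.1}, and in particular $\varphi_1$ would be injective.

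To produce the pointwise bound, I would combine the witness $a\in M_k(M_n)$ with an arbitrary positive $b\in M_m(M_n)$ of norm $1$ into a single positive element $c$ in a common matrix algebra $M_N(M_n)$ (after zero-padding so that $a$ and $b$ sit together). A natural candidate is a scalar combination such as $c = a + \alpha(\mathbf{1}-b)$ (in the $m=k$ case) with $\alpha>0$ tuned to balance the two sides, or alternatively an $x_1$-style $2\times 2$ block as in Lemma~\ref{lem:x1} built from $a$ and $b$. The estimate $\varphi_2^{(k)}(a)\leq sI$ together with the cp-contractivity of $\varphi_2$ gives an upper bound of the form $\|\varphi_2^{(N)}(c)\|\leq s+\alpha$; if $\alpha$ is chosen so that $s+\alpha<r\|c\|$, then the direct-sum inequality $\|\varphi^{(N)}(c)\|\geq r\|c\|$ forces $\|\varphi_1^{(N)}(c)\|\geq r\|c\|$. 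Rearranging an upper bound on $\|\varphi_1^{(N)}(c)\|$ via $\|\varphi_1^{(k)}(a)\|\leq 1$ and $\|\varphi_1^{(m)}(b)\|$ should then extract $\|\varphi_1^{(m)}(b)\|\geq \rho$.

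The main obstacle is identifying the correct $c$ and optimizing $\alpha$ to yield precisely the sharp constant $\rho$. A naive block-diagonal combination of $a$ and $b$ only gives a \emph{max}-type lower bound on $\|c\|$, which is too weak to beat the bound on $\|\varphi_2^{(N)}(c)\|$, while off-diagonal positive completions such as $c=\begin{pmatrix} a & a^{1/2}b^{1/2}\\ b^{1/2}a^{1/2} & b \end{pmatrix}$ introduce cross terms that interact poorly with $\varphi$ since $\varphi$ is not multiplicative. The Lemma~\ref{lem:x1} construction is attractive because it provides a sharp norm of $2\|x\|$ without requiring eigenvector alignment, which should overcome the lower-bound issue on $\|c\|$. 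Once the correct $c$ is identified and $\alpha$ optimized, the remaining verification should be a direct computation.
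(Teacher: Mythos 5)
There is a genuine gap here, and it sits at the heart of the lemma. Your reduction is fine as far as it goes: the algebra checks out ($2\rho-1=r-\tfrac{1-r^2}{1-s}$, and $s<(r^2+r-1)/r$ is equivalent to $\rho>1/2$), so \emph{if} you could prove $\V\vp_1^{(j)}(b)\V\geq\rho\V b\V$ for all positive $b$, Lemma \ref{lem:L2.1.1} would indeed deliver the stated bound. But that pointwise estimate is exactly the content of the lemma, and your proposal does not prove it: you list candidate elements $c$ (block-diagonal paddings, $a+\alpha(\mathbf{1}-b)$, off-diagonal completions with $a^{1/2}b^{1/2}$), explain why each one fails or is problematic, and defer the construction to ``once the correct $c$ is identified.'' No such $c$ is identified. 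The obstruction you name is real: in $B(L_1)\oplus B(L_2)$ norms of sums and direct sums of positives behave like maxima, so no positive combination of $a$ with $b$ forces the lower bound $\V\vp^{(N)}(c)\V\geq r\V c\V$ to be witnessed by the $\vp_1$-coordinate unless you already know something about $\vp_2(b)$ --- which you don't, since $\V\vp_2^{(m)}(b)\V$ can be close to $1$. Note also that your target is \emph{stronger} than what the paper proves: you need the constant $\rho=\tfrac12\bigl(1+r-\tfrac{1-r^2}{1-s}\bigr)$ on positives, whereas the paper only establishes $r-\tfrac{1-r^2}{1-s}$ (a smaller number) on general elements, so there is no a priori reason your route even terminates at the right constant.

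The missing mechanism is the one the paper uses to transfer the smallness of $\vp_2$ at the single positive witness $a$ to a spanning family of \emph{non-positive} elements. Concretely: extend $\vp^{-1}$ by Wittstock to $\psi$ with $\psi\vp=r\cdot id$ and factor $\psi^{(n)}(\,\cdot\,)=T^*\pi(\,\cdot\,)S$ through a representation with isometries $S,T$; then take the top eigenvector $\xi_1$ of $a$ and the rank-one operators $\omega_\eta=|\xi_1\ra\la\eta|$, which satisfy $\omega_\eta\omega_\eta^*\leq a$, so Choi--Schwarz gives $\V\vp_2^{(n)}(\omega_\eta)\V\leq s^{1/2}$ for \emph{every} unit vector $\eta$. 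Feeding this into $\psi\vp=r\cdot id$ shows $\V q_{L_2}S\V,\V q_{L_2}T\V\leq\bigl(\tfrac{1-r^2}{1-s}\bigr)^{1/2}$, i.e.\ the Stinespring isometries nearly land in the $L_1$-block, and the bound on $\V\vp_1^{(n)}(x)\V$ for arbitrary $x$ follows. A cheaper version of this idea (a rank-one partial isometry from the top eigenvector of $b$ to that of $a$, plus Schwarz on both coordinates) does give an elementary pointwise bound on positives, but only with constant $r^2$, hence the weaker conclusion $\V\vp_1^{-1}\V_{cb}\leq(2r^2-1)^{-1}$; reaching the constant in (\ref{eq:L2.2.1}) appears to require the dilation argument.
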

\begin{proof} By \cite[Theorem 2.10]{Smith83} every bounded map $\psi$ from an operator space into
$M_n$ is completely bounded with $\V\psi^{(n)}\V=\V\psi\V_{cb}.$  So, we may assume that $k=n.$
Also by \cite[Theorem 2.10]{Smith83}, to prove inequality (\ref{eq:L2.2.1}) it suffices to show that for every $x\in M_n\otimes M_n$ of norm 1, we have
%%%%%%%%%%%%%%%%%%%%%%%%%%%%%% EQUATION 2.2.2 %%%%%%%%%%%%%%%%%%%%%%%%%%%%%%%%%%%%%%%%%
\begin{equation}
\V\vp_1^{(n)}(x)\V\geq r-\frac{1-r^2}{1-s}. \label{eq:L2.2.2}
\end{equation}
By Wittstock's extension theorem  \cite[Theorem 8.2]{Paulsen02}, let 
\begin{equation*}
\widetilde{\psi}:B(L_1)\oplus B(L_2)\rightarrow M_n 
\end{equation*}
be an extension of $\vp^{-1}:\vp(M_n)\rightarrow M_n$ with $\V\widetilde{\psi}\V_{cb}=\V\vp^{-1}\V_{cb}=r^{-1}.$
 Let $\psi=r\widetilde{\psi}.$  Then $\V\psi\V_{cb}=1$ and
%%%%%%%%%%%%%%%%%%%%%%%%%%%%%%% EQUATION 2.2.4 %%%%%%%%%%%%%%%%%%%%%%%%%%%%%%%%%%%%%%%%%%
\begin{equation}
\psi\vp(x)=rx \quad\textrm{ for all }x\in M_n. \label{eq:L2.2.4}
\end{equation}
By the factorization theorem for completely bounded maps \cite[Theorem 8.4]{Paulsen02} there is a unital representaion $(\pi,H)$ of
\begin{equation*}
M_n\otimes(B(L_1)\oplus B(L_2))=B(L_1\otimes\ell^2(n))\oplus B(L_2\otimes\ell^2(n))
\end{equation*}
and isometries $S,T:\ell^2(n)\otimes\ell^2(n)\rightarrow H$ such that
%%%%%%%%%%%%%%%%%%%%%%%%%%%%%% EQUATION 2.2.5 %%%%%%%%%%%%%%%%%%%%%%%%%%%%%%%%%%%%%%%%%%%
\begin{equation}
T^*\pi(x)S=\psi^{(n)}(x)\textrm{ for every } x\in B(L_1\otimes\ell^2(n))\oplus B(L_2\otimes\ell^2(n)).\label{eq:L2.2.5}
\end{equation}
Let $q_{L_1}=\pi(1_{L_1\otimes\ell^2(n)},0)\in B(H)$ and $q_{L_2}=\pi(0,1_{L_2\otimes\ell^2(n)})\in B(H).$

We now show that the ranges of $S$ and $T$ are almost included in $q_{L_1}(H).$

Let $\xi_1\in\ell^2(n)\otimes\ell^2(n)$ be a norm 1 eigenvector for $a$ with eigenvalue 1. Let $\omega_1\in M_n\otimes M_n$
be the orthogonal projection onto $\mathbb{C}\xi_1.$ Then $\omega_1\leq a.$  Since $\vp_2$ is cp, we have $\V\vp_2^{(n)}(\omega_1)\V\leq\V \vp_2^{(n)}(a)\V=s.$ Extend $\xi_1$ to an
orthonormal basis $\xi_1,\xi_2,...,\xi_{n^2}$ for $\ell^2(n)\otimes\ell^2(n).$ For $i=1,...,n^2$ define the rank 1 operators,
\begin{equation*}
\omega_i(\eta)=\la \eta,\xi_i\ra\xi_1, \textrm{ for }\eta\in\ell^2(n)\otimes\ell^2(n).
\end{equation*}
Then 
%%%%%%%%%%%%%%%%%%%%%%%%%%%%%%% EQUATION 2.2.3 %%%%%%%%%%%%%%%%%%%%%%%%%%%%%%%%%%%%%%%%%
\begin{equation}
\omega_i\omega_j^*=\delta_{i,j}\omega_1, \textrm{ for }1\leq i,j\leq n^2. \label{eq:L2.2.3}
\end{equation}

Let $\eta=\sum_{i=1}^{n^2}\alpha_i\xi_i\in\ell^2(n)\otimes\ell^2(n)$ of norm 1 and $\omega_\eta=\sum_{i=1}^{n^2}\overline{\alpha}_i\omega_i\in M_n\otimes M_n.$ By  (\ref{eq:L2.2.3}) and Lemma \ref{lem:L2.3}, it follows that
%%%%%%%%%%%%%%%%%%%%%%%%%%%%%%% EQUATION 2.2.6 %%%%%%%%%%%%%%%%%%%%%%%%%%%%%%%%%%%%%%%%%%
\begin{align}
 \V\vp_2^{(n)}(\omega_\eta)\V&=\V\vp_2^{(n)}(\omega_\eta)\vp_2^{(n)}(\omega_\eta)^*\V^{1/2}\notag \\
&\leq\Big(\sum_{i=1}^{n^2}|\alpha_i|^2\V\vp_2^{(n)}(\omega_1)\V\Big)^{1/2} \leq s^{1/2}.\label{align:L2.2.6}
\end{align}
Combining (\ref{eq:L2.2.4}) and (\ref{eq:L2.2.5}), we have 
\begin{equation*}
r\xi_1=r\omega_\eta(\eta)=\psi^{(n)}\circ\vp^{(n)}(\omega_\eta)\eta=T^*\pi(\vp^{(n)}(\omega_\eta))S\eta.
\end{equation*} 
Therefore, by
(\ref{align:L2.2.6})
%%%%%%%%%%%%%%%%%%%%%%%%%%%%%%% EQUATION 2.2.7 %%%%%%%%%%%%%%%%%%%%%%%%%%%%%%%%%%%%%%%%%%%%
\begin{align}
r^2&\leq \V\pi(\vp^{(n)}(\omega_\eta))S\eta\V^2 \notag \\
&= \V\pi(\vp_1^{(n)}(\omega_\eta),0)q_{L_1}S\eta\V^2+\V\pi(0,\vp_2^{(n)}(\omega_\eta))q_{L_2}S\eta\V^2 \label{align:L2.2.7}\\
&\leq \V q_{L_1}S\eta\V^2+s\V q_{L_2}S\eta\V^2. \notag 
\end{align}
Combining (\ref{align:L2.2.7}) with the fact that $S$ is an isometry, we obtain
\begin{align*}
 1&=\V q_{L_1}S\eta\V^2+\V q_{L_2}S\eta\V^2  \\
&\geq r^2-s\V q_{L_2}S\eta\V^2+\V q_{L_2}S\eta\V^2 
\end{align*}
Since $\eta\in\ell^2(n)\otimes\ell^2(n)$ was an arbitrary vector of norm 1, it follows that 
%%%%%%%%%%%%%%%%%%%%%%%%%%%%% EQUATION 2.2.8 %%%%%%%%%%%%%%%%%%%%%%%%%%%%%%%%%%%%%%%%%%%%%%%%%
\begin{equation}
\V q_{L_2}S\V\leq \Big(\frac{1-r^2}{1-s}\Big)^{1/2} \label{eq:L2.2.8}
\end{equation}
Define $\psi^*:B(L_1)\oplus B(L_2)\rightarrow M_n$  by $\psi^*(x)=\psi(x^*)^*.$  By the complete positivity of
$\vp$ it follows that $\psi^*\vp=r\cdot id_{M_n}.$  Moreover note that 
\begin{equation*}
(\psi^*)^{(n)}(x)=S^*\pi(x)T. 
\end{equation*}
So, by replacing $\psi$ with $\psi^*$ (and hence $S$ with $T$)in the above proof we obtain
%%%%%%%%%%%%%%%%%%%%%%%%%%%% EQUATION 2.2.9 %%%%%%%%%%%%%%%%%%%%%%%%%%%%%%%%%%%%%%%%%%%%%%%%%%%%%%%%%
\begin{equation}
\V T^*q_{L_2}\V=\V q_{L_2}T\V\leq \Big(\frac{1-r^2}{1-s}\Big)^{1/2} \label{eq:L2.2.9}
\end{equation}
Let $x\in M_n\otimes M_n$ be arbitrary of norm 1.  By (\ref{eq:L2.2.4}), (\ref{eq:L2.2.5}), then (\ref{eq:L2.2.8}) and (\ref{eq:L2.2.9}), we have
\begin{align*}
r&=\V\psi^{(n)}\vp^{(n)}(x)\V\\
&=\V T^*\pi(\vp^{(n)}(x))S\V \\
&=\V T^*(q_{L_1}(\pi(\vp_1^{(n)}(x),0))q_{L_1}+q_{L_2}(\pi(0,\vp_2^{(n)}(x)))q_{L_2})S\V\\
&\leq  \V\vp_1^{(n)}(x)\V+\V T^*q_{L_2}(\pi(0,\vp_2^{(n)}(x)))q_{L_2}S\V\\
&\leq \V\vp_1^{(n)}(x)\V +\frac{1-r^2}{1-s}.
\end{align*}
This proves (\ref{eq:L2.2.2}) and the lemma. 
\end{proof}
We will be careful with our norm estimates throughout the paper. Thus, the technical nature of Lemma \ref{lem:L2.2}.
Colloquially it states; regardless of the value of $n\in\mathbb{N}$,  if $\vp$ is \emph{almost} a complete isometry, then either $\vp_1$ or $\vp_2$ is \emph{almost} a complete isometry. 
In particular, we have
%%%%%%%%%%%%%%%%%%%%%%%%%%%%%%%%%%%%%%%%%%%%%%%%%%%%%%%%%%%%%%%%%%%%%%%%%%%%%%%%%%%%%%%%%%%%%%%%%%%%%%%%%%%%%%
%%%%%%%%%%%%%%%%%%%%%%%%%%%  COROLLARY ABOUT SPLITTING UP ALMOST C.I. %%%%%%%%%%%%%%%%%%%%%%%%%%%%%%%%%%%%%%%%
%%%%%%%%%%%%%%%%%%%%%%%%%%%%%%%%%%%%%%%%%%%%%%%%%%%%%%%%%%%%%%%%%%%%%%%%%%%%%%%%%%%%%%%%%%%%%%%%%%%%%%%%%%%%%%
\begin{corollary} \label{cor:C2.1}  Let $L_1,L_2,n$ and $\vp$ be as in Lemma \textup{\ref{lem:L2.2}}, but with $\V\vp^{-1}\V_{cb}=r^{-1}<125/124.$  Then either $\vp_1$ or $\vp_2$ is injective, and 
\begin{equation*}
\V\vp_i^{-1}\V_{cb}\leq (1+(r-1)^{1/3})^{-1}
\end{equation*} 
for either $i=1$ or $i=2.$
\end{corollary}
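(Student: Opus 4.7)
The strategy is a threshold-based dichotomy. I would set
\[
s_0 \;=\; 1 \;-\; \tfrac{1}{2}(1-r)^{1/3},
\]
chosen precisely so that $2s_0 - 1 = 1 - (1-r)^{1/3} = 1 + (r-1)^{1/3}$; in other words, $s_0$ is calibrated so that the conclusion of Lemma~\ref{lem:L2.1.1} applied with constant $s_0$ already hits the target bound $(1+(r-1)^{1/3})^{-1}$ on the nose. Then split into two cases according to whether $\vp_2$ admits a positive norm-one element that it nearly annihilates.

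In the favorable case, suppose there exist $k \in \mathbb{N}$ and $a \in M_k \otimes M_n$ with $a \geq 0$, $\V a\V = 1$, and $\V\vp_2^{(k)}(a)\V = s < s_0$. I would apply Lemma~\ref{lem:L2.2}, after first checking that the numerical hypothesis $r^{-1} < 125/124$ keeps $r^{-1}$ well below $2/(\sqrt{6}-1)$ and that $s_0 < (r^2+r-1)/r$ (both follow from $(1-r)^{1/3} < 1/5$). Monotonicity of the bound in $s$ then yields $\V\vp_1^{-1}\V_{cb} \leq (r - (1-r^2)/(1-s_0))^{-1}$, which I would show is at most the target. In the opposite case $\V\vp_2^{(k)}(a)\V \geq s_0$ for every positive norm-one $a$, and since $\vp_2$ is cpc as a coordinate of the cpc map $\vp$ and $s_0 > 1/2$, Lemma~\ref{lem:L2.1.1} applies to give $\V\vp_2^{-1}\V_{cb} \leq (2s_0-1)^{-1}$, which is the target by the choice of $s_0$.

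The main obstacle is the calibration itself: the Lemma~\ref{lem:L2.1.1} bound degrades linearly in $1-s_0$ while the Lemma~\ref{lem:L2.2} bound degrades like $(1-r)/(1-s_0)$, and balancing these near the target is what produces the cube-root exponent in the conclusion. Once $s_0$ is pinned down by the Lemma~\ref{lem:L2.1.1} side, showing that the Lemma~\ref{lem:L2.2} side also clears the bar in Case~1 reduces, after substituting $1-s_0 = \tfrac12(1-r)^{1/3}$ and rearranging, to the scalar inequality
\[
1 - (1-r)^{2/3} \;\geq\; 2(1+r)(1-r)^{1/3},
\]
which is precisely where the quantitative assumption $(1-r)^{1/3} < 1/5$ coming from $r^{-1} < 125/124$ gets used. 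This elementary but somewhat fiddly verification is the only place the specific numerical constant $125/124$ enters, and I expect it to be the most delicate step.
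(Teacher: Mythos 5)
Your proposal is correct and is essentially the paper's own argument: you use the same threshold $s_0=\tfrac12\bigl(2+(r-1)^{1/3}\bigr)$, with Lemma~\ref{lem:L2.1.1} giving the target bound for $\vp_2$ when every positive norm-one element stays above it, and Lemma~\ref{lem:L2.2} giving the bound for $\vp_1$ when some positive norm-one element falls below it. The only difference is presentational: the paper opens the dichotomy with a general norm-one $x$ violating the target and invokes the contrapositive of Lemma~\ref{lem:L2.1.1} to extract the positive element, whereas you split directly on positive elements; your reduction to $1-(1-r)^{2/3}\ge 2(1+r)(1-r)^{1/3}$ for $(1-r)^{1/3}<1/5$ is exactly the numerical verification the paper leaves implicit.
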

\begin{proof} If $\vp_2$ is injective with $\V\vp_2^{-1}\V_{cb}<(1+(r-1)^{1/3})^{-1},$ we are done. If not, then there is an $x\in M_n\otimes M_n$ of norm 1 such that $\V\vp^{(n)}_2(x)\V<1+(r-1)^{1/3}.$
Then Lemma \ref{lem:L2.1.1} provides an $a\in M_n\otimes M_n$ of norm 1 with $a\geq0$ such that 
\begin{equation*}
\V\vp^{(n)}_2(a)\V\leq \frac{1}{2}(1+\V\vp^{(n)}_2(x)\V)\leq\frac{1}{2}(2+(r-1)^{1/3})
\end{equation*}
We now apply Lemma \ref{lem:L2.2} with $s=\frac{1}{2}(1+\V\vp^{(n)}_2(x)\V)$ to obtain,
\begin{equation*}
\V\vp_1^{-1}\V_{cb}\leq\Big(r-\frac{1-r^2}{1-s}\Big)^{-1}\leq(1+(r-1)^{1/3})^{-1},
\end{equation*}
which holds whenever $124/125<r\leq1.$ 
\end{proof}
Finally, we recall 2 useful perturbation lemmas.
%%%%%%%%%%%%%%%%%%%%%%%%%%%%%%%%%%%%%%%%%%%%%%%%%%%%%%%%%%%%%%%%%%%%%%%%%%%%%%%%%%%%%%%%%%%%%%%%%%%%%%%%%%%%%%
%%%%%%%%%%%%%%%%%%%%%%%%%%%%%%%%%%%%%%  KIRCHBERG'S PERTURBATION LEMMA  %%%%%%%%%%%%%%%%%%%%%%%%%%%%%%%%%%%%%
%%%%%%%%%%%%%%%%%%%%%%%%%%%%%%%%%%%%%%%%%%%%%%%%%%%%%%%%%%%%%%%%%%%%%%%%%%%%%%%%%%%%%%%%%%%%%%%%%%%%%%%%%%%%%%
\begin{lemma}\textup{(\cite[Proposition 1.19]{Wassermann94})} \label{lem:Kirchperb}
Let A be a unital $C^*$-algebra and $N$ an injective von Neumann algebra. Let $\vp:A\rightarrow N$ be a unital self-adjoint map with
$\V\vp\V_{cb}\leq 1+\epsilon$ for some $\epsilon>0.$  Then there is a ucp map $t:A\rightarrow N$ such that 
$\V t-\vp\V_{cb}\leq\epsilon.$ 
\end{lemma}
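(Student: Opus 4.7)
The plan is to combine the Haagerup-Paulsen-Wittstock decomposition of self-adjoint cb maps into injective von Neumann algebras with a normalization step. Since $\vp$ is self-adjoint and $N$ is injective, I would first invoke Haagerup's identification of the decomposable and cb norms (valid for $B(H)$-valued maps and transferred to $N$ via the conditional expectation $B(H)\to N$ guaranteed by injectivity) to obtain completely positive maps $\psi_1,\psi_2:A\to N$ with $\vp=\psi_1-\psi_2$ and $\V\psi_1+\psi_2\V_{cb}\leq\V\vp\V_{cb}\leq 1+\epsilon$.

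Evaluating at $1_A$, the unitality of $\vp$ forces $\psi_1(1)-\psi_2(1)=1$ and $\V\psi_1(1)+\psi_2(1)\V\leq 1+\epsilon$; combined with $\psi_i(1)\geq 0$ this yields $0\leq\psi_2(1)\leq\epsilon/2$ and $1\leq\psi_1(1)\leq 1+\epsilon/2$. In particular, $\psi_1(1)$ is invertible in $N$, and I would set $t(a):=\psi_1(1)^{-1/2}\psi_1(a)\psi_1(1)^{-1/2}$, which is manifestly UCP.

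To estimate $\V t-\vp\V_{cb}$ I would split via the triangle inequality, bounding it by $\V t-\psi_1\V_{cb}+\V\psi_1-\vp\V_{cb}=\V t-\psi_1\V_{cb}+\V\psi_2\V_{cb}$. The second term equals $\V\psi_2(1)\V\leq\epsilon/2$. For the first, writing $c=\psi_1(1)^{-1/2}$, I would expand $(t-\psi_1)(a)=(c-1)\psi_1(a)c+\psi_1(a)(c-1)$ and apply $\V c-1\V\leq 1-(1+\epsilon/2)^{-1/2}$ together with $\V\psi_1\V_{cb}\leq 1+\epsilon/2$.

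The main obstacle is obtaining the sharp constant $\epsilon$ rather than just $C\epsilon$ for some $C>1$: the bookkeeping above has unavoidable losses from both the renormalization of $\psi_1$ and the Paulsen split. To reach the exact bound stated, one probably needs to bypass the separate ``positive part plus renormalization'' scheme and work directly with the $2\times 2$ Paulsen dilation: take a Stinespring factorization of the CP map $\Phi:A\to M_2(N)$ with off-diagonal entries $\vp$ and diagonal entries $\psi_1,\psi_2$, then build $t$ from a compression along an isometry close to the ``upper corner'' of the dilating operator. Carrying out this matricial extraction with the precise constant is the delicate point and is essentially the content of Wassermann's original argument.
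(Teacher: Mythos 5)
The paper does not prove this lemma at all --- it is quoted from Wassermann with a citation --- so there is no internal argument to compare against; I will assess your proposal on its own terms. Your skeleton is the standard (and correct) one: use Haagerup's identification of the decomposable and completely bounded norms for maps into an injective range to write $\vp=\psi_1-\psi_2$ with $\psi_1,\psi_2$ completely positive and $\V\psi_1+\psi_2\V_{cb}\leq\V\vp\V_{cb}$, read off $0\leq\psi_2(1)\leq(\epsilon/2)1$ and $1\leq\psi_1(1)\leq(1+\epsilon/2)1$ from unitality, and renormalize $\psi_1$ to $t=\psi_1(1)^{-1/2}\psi_1(\cdot)\psi_1(1)^{-1/2}$. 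All of that is right.

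The one genuine issue is the one you flag yourself: your bookkeeping for $\V t-\psi_1\V_{cb}$ via $\V c-1\V\,\V\psi_1\V_{cb}(1+\V c\V)$ evaluates (with $u=1+\epsilon/2$) to $\epsilon\sqrt{u}/(\sqrt{u}+1)$, which is strictly larger than $\epsilon/2$ for $\epsilon>0$, so your total strictly overshoots $\epsilon$. But the fix is a one-line Stinespring computation, not the delicate matricial extraction you anticipate. Write $\psi_1(a)=V^*\pi(a)V$ with $\pi$ unital, so that $V^*V=\psi_1(1)$ and $\V V\V\leq(1+\epsilon/2)^{1/2}$. Then $W:=Vc$ with $c=\psi_1(1)^{-1/2}$ is an isometry, $t(a)=W^*\pi(a)W$, and
\[
\V W-V\V^2=\V(c-1)\psi_1(1)(c-1)\V=\big\V\big(\psi_1(1)^{1/2}-1\big)^2\big\V\leq\big((1+\epsilon/2)^{1/2}-1\big)^2 .
\]
Since $W^*\pi(a)W-V^*\pi(a)V=(W-V)^*\pi(a)W+V^*\pi(a)(W-V)$, and the same identity persists under all matrix amplifications,
\[
\V t-\psi_1\V_{cb}\leq\V W-V\V\big(\V W\V+\V V\V\big)\leq\big((1+\epsilon/2)^{1/2}-1\big)\big((1+\epsilon/2)^{1/2}+1\big)=\epsilon/2 ,
\]
and the triangle inequality with $\V\psi_2\V_{cb}=\V\psi_2(1)\V\leq\epsilon/2$ gives exactly $\V t-\vp\V_{cb}\leq\epsilon$. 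Your closing suggestion to use the $2\times 2$ Paulsen dilation with off-diagonal entries $\vp$ conflates the dilation used to prove the decomposition theorem with the Stinespring dilation of $\psi_1$; the ``compression along a nearby isometry'' idea in your last sentence is the right one, but it should be applied to $\psi_1$ itself as above.
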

%%%%%%%%%%%%%%%%%%%%%%%%%%%%%%%%%%%%%%%%%%%%%%%%%%%%%%%%%%%%%%%%%%%%%%%%%%%%%%%%%%%%%%%%%%%%%%%%%%%%%%%%%%%%%%
%%%%%%%%%%%%%%%%%%%%%%%%%%%%%%%%%%%%%%  PISIER'S PERTURBATION LEMMA     %%%%%%%%%%%%%%%%%%%%%%%%%%%%%%%%%%%%%%
%%%%%%%%%%%%%%%%%%%%%%%%%%%%%%%%%%%%%%%%%%%%%%%%%%%%%%%%%%%%%%%%%%%%%%%%%%%%%%%%%%%%%%%%%%%%%%%%%%%%%%%%%%%%%%
\begin{lemma} \textup{(\cite[Lemma 2.13.2]{Pisier03})}  \label{lem:Pisierperb} Let $0<\epsilon<1$  and $X$ be an operator space.
Let $(x_i,\widehat{x}_i)_{i=1}^n$ be a biorthogonal system with $x_i\in X$ and $\widehat{x}_i\in X^*.$  Let $y_1,...,y_n\in X$ be such that
\begin{equation*}
\sum\V\widehat{x}_i\V\textrm{ }\V x_i-y_i\V<\epsilon. 
\end{equation*}
Then there is a complete isomorphism $w:X\rightarrow X$ such that $w(y_i)=x_i$ and $\V w\V_{cb}\V w^{-1}\V_{cb}\leq \frac{1+\epsilon}{1-\epsilon}.$
\end{lemma}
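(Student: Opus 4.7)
The plan is to build a perturbing automorphism of $X$ directly from the biorthogonal data and then invert it. Define
\begin{equation*}
v:X\to X,\qquad v(x)=x+\sum_{i=1}^{n}\widehat{x}_i(x)(y_i-x_i),
\end{equation*}
so that by biorthogonality $v(x_j)=x_j+(y_j-x_j)=y_j$ for every $j$. Setting $w:=v^{-1}$ (once invertibility is established) automatically gives $w(y_i)=x_i$, which is the required interpolation property, and the cb-norm estimates on $w$ and $w^{-1}$ will follow from a Neumann series applied to $v-\mathrm{id}_X$.

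The key estimate is the cb-norm bound on $v-\mathrm{id}_X$. Each term $x\mapsto\widehat{x}_i(x)(y_i-x_i)$ factors as the composition $X\xrightarrow{\widehat{x}_i}\mathbb{C}\xrightarrow{\cdot(y_i-x_i)}X$. Scalar-valued functionals on an operator space have cb norm equal to ordinary norm, and the same holds for scalar-valued rank-one maps $\mathbb{C}\to X$, so the $i$th summand has cb norm exactly $\V\widehat{x}_i\V\cdot\V y_i-x_i\V$. Summing yields
\begin{equation*}
\V v-\mathrm{id}_X\V_{cb}\leq\sum_{i=1}^{n}\V\widehat{x}_i\V\,\V x_i-y_i\V<\epsilon<1.
\end{equation*}

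With $\V v-\mathrm{id}_X\V_{cb}<1$, the Neumann series converges in the Banach algebra of cb maps on $X$, producing invertibility along with $\V v\V_{cb}\leq 1+\epsilon$ and $\V v^{-1}\V_{cb}\leq(1-\epsilon)^{-1}$. Then $w:=v^{-1}$ satisfies $w(y_i)=x_i$, and
\begin{equation*}
\V w\V_{cb}\V w^{-1}\V_{cb}=\V v^{-1}\V_{cb}\V v\V_{cb}\leq\frac{1+\epsilon}{1-\epsilon},
\end{equation*}
which is the claim.

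The only nontrivial point is the cb-norm identity for rank-one scalar-valued factorizations; everything else is a routine Neumann-series argument. This identity is standard operator-space theory (it reduces to the observation that $M_n\otimes_{\min}\mathbb{C}=M_n$), but it is the one place where one must invoke something beyond elementary Banach-algebra reasoning, and it is what makes the constant $(1+\epsilon)/(1-\epsilon)$ a \emph{complete} isomorphism constant rather than only a Banach-space one.
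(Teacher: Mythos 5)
Your proof is correct and is the standard argument for this perturbation lemma (the paper itself gives no proof, citing Pisier's Lemma 2.13.2, whose proof is exactly this: perturb the identity by the rank-one maps $x\mapsto\widehat{x}_i(x)(y_i-x_i)$, bound the cb norm via the factorization through $\mathbb{C}$, and invert by a Neumann series). The one point you flag as nontrivial --- that scalar functionals and rank-one maps out of $\mathbb{C}$ have cb norm equal to their norm --- is indeed the only operator-space input, and you handle it correctly.
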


%%%%%%%%%%%%%%%%%%%%%%%%%%%%%%%%%%%%%%%%%%%%%%%%%%%%%%%%%%%%%%%%%%%%%%%%%%%%%%%%%%%%%%%%%%%%%%%%%%%%%%%%%%%%%%
%%%%%%%%%%%%%%%%%%%%%%%%%%%%%%%%%%%%%  SECTION 3 ''MAIN THEOREM`` %%%%%%%%%%%%%%%%%%%%%%%%%%%%%%%%%%%%%%%%%%%%
%%%%%%%%%%%%%%%%%%%%%%%%%%%%%%%%%%%%%%%%%%%%%%%%%%%%%%%%%%%%%%%%%%%%%%%%%%%%%%%%%%%%%%%%%%%%%%%%%%%%%%%%%%%%%%
\section{First examples}
For $1\leq\lambda<(\frac{1+\sqrt{3}}{2})^{1/2},$ let
\begin{equation}
 f(\lambda)=\frac{\lambda}{1-\lambda\sqrt{2(\lambda-1)}},\label{eq:fdef}
\end{equation}
and consider the real polynomial,
\begin{equation}
g(y)=y(1+y)(y-1)(2-y)-2(2-y)^2+1. \label{eq:gdef}
\end{equation}
Note that $f(\lambda)\rightarrow 1$ as $\lambda\rightarrow1$, and $g(1)=-1.$
Let $\lambda'$ in the domain of $f$ be such that 
%%%%%%%%%%%%%%%%%%%%%%%%%%%%%%%%%%%%% EQUATION 3.1 %%%%%%%%%%%%%%%%%%%%%%%%%%%%%%%%%%%%%%%%%%%%%%%%%
\begin{equation}
g(f(\lambda'))<0. \label{def:D3.0}
\end{equation}
A calculation shows that any $\lambda'<1.005$ satisfies $(\ref{def:D3.0})$

%%%%%%%%%%%%%%%%%%%%%%%%%%%%%%%%%%%%%%%%%%%%%%%%%%%%%%%%%%%%%%%%%%%%%%%%%%%%%%%%%%%%%%%%%%%%%%%%%%%%
%%%%%%%%%%%%%%%%%%%%%%%%%%%%%%%%%%%%% TECHNICAL, USEFUL BIG THEOREM  %%%%%%%%%%%%%%%%%%%%%%%%%%%%%%%
%%%%%%%%%%%%%%%%%%%%%%%%%%%%%%%%%%%%%%%%%%%%%%%%%%%%%%%%%%%%%%%%%%%%%%%%%%%%%%%%%%%%%%%%%%%%%%%%%%%%

\begin{theorem} \label{thm:T3.3}
Let A be a unital $C^*$-algebra and let $\lambda'$ satisfy $(\ref{def:D3.0}).$ Suppose that A has a unital faithful representation
$(\pi,H_\pi)=(\rho\oplus\sigma, H_\rho\oplus H_\sigma)$, such that ker$(\sigma)\neq\{0\}.$ Furthermore suppose there is a sequence $(x_n)$ in the unit sphere of $A$ such that  $\rho(x_n)$ is an isometry for each $n$, and $\rho(x_nx_n^*)\rightarrow0$ strongly in $B(H_\rho).$  Then $\mathcal{OL}_\infty(A)\geq\lambda'.$ 
\end{theorem}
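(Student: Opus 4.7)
I argue by contradiction: assume $\mathcal{OL}_\infty(A) < \lambda'$. Pick a positive $a\in A$ with $\|a\|=1$ and $\sigma(a)=0$, which exists since $\ker\sigma\neq\{0\}$. For $N\in\mathbb{N}$ to be fixed later, apply Lemma~\ref{lem:L2.1} to the finite set $F_N=\{a,x_N,x_N^*,x_Nx_N^*,x_N^*x_N\}$ to obtain a finite-dimensional $B=\bigoplus_j M_{n_j}$, a ucp map $\psi:A\to B$, and a unital self-adjoint $\varphi:B\to A$ with $\|\varphi\|_{cb}\leq f(\lambda')$, $\psi\varphi=id_B$ and $\varphi\psi|_{F_N}=id_{F_N}$. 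Identify $A\subset B(H_\pi)$ via $\pi$, and apply Lemma~\ref{lem:Kirchperb} to the unital self-adjoint map $\pi\varphi:B\to B(H_\pi)$ (with $B(H_\pi)$ injective) to produce a ucp $\widetilde{\varphi}:B\to B(H_\pi)$ with $\|\widetilde{\varphi}-\pi\varphi\|_{cb}\leq f(\lambda')-1$; write $\widetilde{\varphi}=\widetilde{\varphi}_\rho\oplus\widetilde{\varphi}_\sigma$. Extending $\psi$ via Arveson's theorem to a ucp $\widetilde{\psi}:B(H_\pi)\to B$ shows $\|\widetilde{\psi}\widetilde{\varphi}-id_B\|_{cb}\leq f(\lambda')-1$, so $\widetilde{\varphi}$ is injective with $\|\widetilde{\varphi}^{-1}\|_{cb}\leq(2-f(\lambda'))^{-1}$, i.e.\ an almost complete isometry.

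Next I locate a matrix summand of $B$ on which $\widetilde{\varphi}_\rho$ is itself an almost complete isometry. Set $b=\psi(a)\geq 0$. Conditions (i) and (iv) of Lemma~\ref{lem:L2.1} give $1/f(\lambda')\leq\|b\|\leq 1$, while $\sigma(a)=0$ and the perturbation estimate yield $\|\widetilde{\varphi}_\sigma(b)\|\leq f(\lambda')-1$. Writing $b=\bigoplus_j b_j$ with each $b_j\geq 0$, positivity of $\widetilde{\varphi}_\sigma$ forces $0\leq\widetilde{\varphi}_\sigma(b_j)\leq\widetilde{\varphi}_\sigma(b)$, hence $\|\widetilde{\varphi}_\sigma(b_j)\|\leq f(\lambda')-1$ for every $j$. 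Choose $j^*$ with $\|b_{j^*}\|=\|b\|$ and apply Lemma~\ref{lem:L2.2} to the cpc map $\widetilde{\varphi}|_{M_{n_{j^*}}}:M_{n_{j^*}}\to B(H_\rho)\oplus B(H_\sigma)$ with the positive norm-$1$ element $b_{j^*}/\|b_{j^*}\|$, taking $r=2-f(\lambda')$ and $s\leq f(\lambda')(f(\lambda')-1)$. This yields $\widetilde{\varphi}_\rho|_{M_{n_{j^*}}}$ injective with inverse cb-norm at most $(r-(1-r^2)/(1-s))^{-1}$; the polynomial inequality $g(f(\lambda'))<0$ is tailored precisely so that this quantity lies below the threshold needed for the next step.

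For the contradiction, let $d,D,E\in M_{n_{j^*}}$ denote the $j^*$-summands of $\psi(x_N)$, $\psi(x_N^*x_N)$ and $\psi(x_Nx_N^*)$ respectively. Lemma~\ref{lem:L2.3} applied to $\psi$ gives $d^*d\leq D$ and $dd^*\leq E$, while applying it to the ucp maps $\widetilde{\varphi}_\rho,\widetilde{\varphi}_\sigma$ together with the perturbation $\widetilde{\varphi}\approx\pi\varphi$ on $F_N$ shows that the positive defects $D-d^*d$ and $E-dd^*$ have $\widetilde{\varphi}$-images of norm $O(f(\lambda')-1)$. Transferring back through the almost complete isometries $\widetilde{\varphi}$ and $\widetilde{\varphi}_\rho|_{M_{n_{j^*}}}$, one obtains $\|D\|\approx\|d^*d\|$ and $\|E\|\approx\|dd^*\|$; since $d^*d$ and $dd^*$ are matrices in the finite-dimensional $M_{n_{j^*}}$ they share the same spectrum, so $\|d^*d\|=\|dd^*\|$, forcing $\|D\|\approx\|E\|$. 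On the other hand $\widetilde{\varphi}_\rho(D)\approx\rho(x_N^*x_N)=1_{H_\rho}$ transfers (via the almost-isometry $\widetilde{\varphi}_\rho|_{M_{n_{j^*}}}$ together with the Kadison--Schwarz lower bound $\widetilde{\varphi}_\rho(D)\geq\widetilde{\varphi}_\rho(d)^*\widetilde{\varphi}_\rho(d)$) to $\|D\|\approx 1$, while $\widetilde{\varphi}_\rho(E)$ is a positive operator bounded (up to $O(f(\lambda')-1)$) above by $P_{V_N}:=\rho(x_Nx_N^*)$. Because $P_{V_N}\to 0$ \emph{strongly} and $\widetilde{\varphi}_\rho(M_{n_{j^*}})$ is a fixed finite-dimensional subspace of $B(H_\rho)$, for $N$ sufficiently large $\|P_{V_N}|_{\widetilde{\varphi}_\rho(M_{n_{j^*}})}\|$ is arbitrarily small and so $\|E\|$ is strictly smaller than $\|D\|$ by a definite gap, contradicting $\|D\|\approx\|E\|$. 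This quantitative contradiction is exactly what $g(f(\lambda'))<0$ encodes.

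The main obstacle is the quantitative bookkeeping in the final step: one has to combine all the cb-perturbation errors from Lemmas~\ref{lem:L2.1}, \ref{lem:Kirchperb} and \ref{lem:L2.2} with the Kadison--Schwarz defects (Lemma~\ref{lem:L2.3}) of both $\psi$ at $x_N$ and of $\widetilde{\varphi}_\rho$ at $\psi(x_N)$, in order to land on the single polynomial inequality $g(f(\lambda'))<0$. The key use of the hypothesis that $\rho(x_Nx_N^*)\to 0$ \emph{strongly} (rather than in norm) is that one only needs $P_{V_N}$ to be small on the fixed finite-dimensional subspace $\widetilde{\varphi}_\rho(M_{n_{j^*}})\subset B(H_\rho)$, which is automatic from strong convergence; this flexibility is what allows the theorem to furnish examples of nuclear quasidiagonal $C^*$-algebras with $\mathcal{OL}_\infty>1$ well beyond the Toeplitz prototype.
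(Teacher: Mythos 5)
Your overall architecture matches the paper's up to the last step: argue by contradiction, get $(B,\psi,\vp)$ from Lemma \ref{lem:L2.1}, perturb $\vp$ to a ucp almost complete isometry via Lemma \ref{lem:Kirchperb}, use the element killed by $\sigma$ together with Lemma \ref{lem:L2.2} to find a summand $pB\cong M_n$ on which the $\rho$-coordinate is an almost complete isometry, and then exploit finiteness of $M_n$ against the fact that $\rho(x_N)$ is a proper isometry. But the closing contradiction, as you set it up, does not work, and the failure is exactly at the point where the hypothesis $\rho(x_nx_n^*)\to 0$ strongly must be used.

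First, the claim that $\|E\|$ is small for large $N$ is unjustified. You write that $\widetilde{\vp}_\rho(E)$ is dominated (up to error) by $P_{V_N}=\rho(x_Nx_N^*)$ and that, since $P_{V_N}\to 0$ strongly and $\widetilde{\vp}_\rho(M_{n_{j^*}})$ is a ``fixed finite-dimensional subspace,'' the restriction of $P_{V_N}$ to it is eventually small. That subspace is \emph{not} fixed: $B$, $\psi$, $\vp$ and hence $j^*$ are all obtained by applying Lemma \ref{lem:L2.1} to $F_N$, so they change with $N$. Even if it were fixed, strong convergence $P_{V_N}\to 0$ gives $\|P_{V_N}\xi\|\to 0$ for vectors $\xi\in H_\rho$; it does not make $\|P_{V_N}T\|$ or $\sup_\xi\la P_{V_N}\xi,\xi\ra$ small for operators $T$ in a subspace of $B(H_\rho)$ — indeed $\|P_{V_N}\|=1$ always, and a positive operator dominated by $P_{V_N}$ can perfectly well have norm close to $1$. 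Second, the comparison you make is the wrong one: $\|d^*d\|=\|dd^*\|$ holds in \emph{every} $C^*$-algebra ($=\|d\|^2$), so comparing $\|D\|$ with $\|E\|$ extracts no finite-dimensional information, and in fact both can be close to $1$. The finiteness of $M_n$ enters through the \emph{bottom} of the spectrum: $\|1-v^*v\|=\|1-vv^*\|$ for a contraction $v\in M_n$, which is false for a proper isometry. To exploit this you need a norm-one positive $b$ with $\sigma(b)=0$ \emph{and} $\pi(b)\le\pi(1-x_Nx_N^*)$; then $\|p(1-\psi(\pi(x_Nx_N^*)))\|\ge\|p\psi(\pi(b))\|\ge f(\lambda')^{-1}$ is large, the finiteness identity transfers this to $\|p(1-\psi(\pi(x_N^*x_N)))\|$, and pushing through $q_\rho t|_{pB}$ contradicts $\rho(x_N^*x_N)=1$. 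The paper manufactures such a $b$ by compressing: $b=\|(1-yy^*)a(1-yy^*)\|^{-1}(1-yy^*)a(1-yy^*)$ with $y=x_n$, and the strong convergence $\rho(x_nx_n^*)\to 0$ is used once, only to guarantee that for some $n$ this compression is nonzero (since $\rho(1-x_nx_n^*)\rho(a)\rho(1-x_nx_n^*)\to\rho(a)\ne 0$ strongly). Your proposal never constructs an element tied to $1-x_Nx_N^*$ in this way, and without it the contradiction cannot be closed.
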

\begin{proof} Let $a\in \textrm{ker}(\sigma)$ be positive and norm 1.
Choose $n$ large enough so $\rho(1-x_nx_n^*)\rho(a)\rho(1-x_nx_n^*)\neq0.$ 
Set $y=x_n,$ and let
\begin{equation*}
 b=\V (1-yy^*)a(1-yy^*)\V^{-1}(1-yy^*)a(1-yy^*).
\end{equation*}
Then  $\sigma(b)=0,$ hence $1=\V b\V=\V\rho(b)\V.$ Since $\rho(1-yy^*)$ is a projection, it follows that $\rho(b)\leq\rho(1-yy^*),$ hence
%%%%%%%%%%%%%%%%%%%%%%%%%%%%%%%%%%%%%% EQUATION 3.2 %%%%%%%%%%%%%%%%%%%%%%%%%%%%%%%%%%%%%%%%%%%%%%%%%%%%%%%%%%%%%%%%%%%%%%%%%%%%%
\begin{equation}
\pi(b)\leq \pi(1-yy^*). \label{eq:T3.1.1}
\end{equation}
Suppose that $\mathcal{OL}_{\infty}(A)<\lambda',$ and obtain a contradiction.   Let $F=\{b,y,y^*\}.$ Let $f$ and $g$ be as in (\ref{eq:fdef}) and (\ref{eq:gdef}). We apply Lemma \ref{lem:L2.1} to obtain a finite-dimensional $C^*$-algebra $B$,  a ucp map $\psi:\pi(A)\rightarrow B$ and a unital, self-adjoint map $\vp:B\rightarrow \pi(A)$ such that
%%%%%%%%%%%%%%%%%%%%%%%%%%%%%%%% EQUATION 3.3 %%%%%%%%%%%%%%%%%%%%%%%%%%%%%%%%%%%%%%%%%%%%%%%%%%%%%%%%%%%%%%%%%%%%%%%%%%%%
\begin{equation}
  \V\vp\V_{cb}<f(\lambda'),\quad  \psi\vp=id_B, \quad\textrm{ and } \quad \vp\psi|_{\pi(F)}=id_{\pi(F)} \label{eq:T3.1.2}
\end{equation}
By Lemma \ref{lem:Kirchperb}, there is a ucp map $t:B\rightarrow B(H_\rho)\oplus B(H_\sigma)$ such that
%%%%%%%%%%%%%%%%%%%%%%%%%%%%%%%%% EQUATION 3.4 %%%%%%%%%%%%%%%%%%%%%%%%%%%%%%%%%%%%%%%%%%%%%%%%%%%%%%%%%%%%%%%%%%%%
\begin{equation}
 \V t-\vp\V_{cb}<f(\lambda')-1. \label{eq:T3.1.3}
\end{equation}
Let $n\in\mathbb{N}$ and $x\in M_n\otimes B.$ Since $\psi\vp=id_B,$ it follows that $\V\vp^{(n)}(x)\V\geq\V x\V.$ Therefore,
\begin{align*}
\V t^{(n)}(x)\V&\geq \V\vp^{(n)}(x)\V-\V\vp^{(n)}(x)-t^{(n)}(x)\V\\
&\geq \V x\V-(f(\lambda')-1)\V x\V\\
&=(2-f(\lambda'))\V x\V.
\end{align*}
Hence $t$ is injective with 
%%%%%%%%%%%%%%%%%%%%%%%%%%%%%%%%%%% EQUATION 3.1.4 %%%%%%%%%%%%%%%%%%%%%%%%%%%%%%%%%%%%%%%%%%%%%%%%%%%%%%%%%%%%%%%%%%%%%%
\begin{equation}
 \V t^{-1}\V_{cb}\leq (2-f(\lambda'))^{-1}. \label{eq:T3.1.4}
\end{equation}
Let $q_\rho$ and $q_\sigma$ denote the orthogonal projections of $H_\rho\oplus H_\sigma$ onto $H_\rho\oplus 0$ and $0\oplus H_\sigma$
respectively.  By (\ref{eq:T3.1.2}) and (\ref{eq:T3.1.3}) we have
%%%%%%%%%%%%%%%%%%%%%%%%%%%%%%%%%%% EQUATION 3.1.5 %%%%%%%%%%%%%%%%%%%%%%%%%%%%%%%%%%%%%%%%%%%%%%%%%%%%%%%%%%%%%%%%%%%%%%
\begin{align}
\V q_\sigma t\psi(\pi(b))\V&\leq \V q_\sigma\vp\psi(\pi(b))\V+f(\lambda')-1 \notag\\
&=\V\sigma(b)\V + f(\lambda')-1 \notag\\
&=f(\lambda')-1.  \label{eq:T3.1.5}
\end{align}
Let $p\in B$ be a minimal central projection such that $\V p\psi(\pi(b))\V=\V\psi(\pi(b))\V.$ Then $pB\cong M_n$ for some $n\in\mathbb{N}.$ 
 Using (\ref{eq:T3.1.4}) and (\ref{eq:T3.1.5}), we apply Lemma \ref{lem:L2.2} with 
\begin{equation*}
s=\V q_\sigma t(p\psi(\pi(b)))\V\leq
\V q_\sigma t\psi(\pi(b)) \V\leq f(\lambda')-1 \quad\textrm{ and } 
\end{equation*}
\begin{equation*}
 r^{-1}=\V (t|_{pB})^{-1}\V_{cb}\leq (2-f(\lambda'))^{-1} 
\end{equation*}

to obtain,
%%%%%%%%%%%%%%%%%%%%%%%%%%%%%%%%%%%%% EQUATION 3.1.6 %%%%%%%%%%%%%%%%%%%%%%%%%%%%%%%%%%%%%%%%%%%%%%%%%%%%%%%%%%%%%%%%%%%%%%%
\begin{equation}
 \V(q_\rho t|_{pB})^{-1}\V_{cb}\leq\Big(\frac{2(2-f(\lambda'))^2-1}{2-f(\lambda')}\Big)^{-1}. \label{eq:T3.1.6}
\end{equation}
 
Recall that for any finite $C^*$-algebra $C$ and any contractive $x\in C$, we have 
\begin{equation}
\V 1-xx^*\V=\V 1-x^*x\V. \label{eq:stabfin}
\end{equation}
 In particular, (\ref{eq:stabfin}) holds for any finite-dimensional $C^*$-algebra. We will use (\ref{eq:T3.1.6}) to
``isolate'' $\rho$, then the fact that $\rho(A)$ violates (\ref{eq:stabfin}) to arrive at a contradiciton.
\begin{align*}
f(\lambda')^{-1}&\leq \V\vp\V_{cb}^{-1}\leq \V\psi(\pi(b))\V \quad \quad (\textrm{ by }(\ref{eq:T3.1.2}))\\
&\\
&=\V p\psi(\pi(b))\V\\
&\\
&\leq \V p(1-\psi(\pi(y)\pi(y^*))\V \quad \quad (\textrm{ by }(\ref{eq:T3.1.1}))\\
&\\
&\leq \V p(1-\psi(\pi(y))\psi(\pi(y^*))\V \quad\textrm{ (by Lemma } \ref{lem:L2.3})\\
&\\
&= \V p(1-\psi(\pi(y^*))\psi(\pi(y))\V \quad(\textrm{ by }(\ref{eq:stabfin}))\\
&\\
&\leq \V(q_\rho t|_{pB})^{-1}\V_{cb}\V q_\rho t(p(1-\psi(\pi(y^*))\psi(\pi(y)))\V\\
&\\
&\leq \V(q_\rho t|_{pB})^{-1}\V_{cb}\V q_\rho t(1-\psi(\pi(y^*))\psi(\pi(y))\V\\
&\\
&\leq \V(q_\rho t|_{pB})^{-1}\V_{cb}\V q_\rho-q_\rho t(\psi(\pi(y^*)))t(\psi(\pi(y)))\V \quad \textrm{ (by Lemma } \ref{lem:L2.3})\\
&\\
&\leq \V(q_\rho t|_{pB})^{-1}\V_{cb}\Big(\V q_\rho-q_\rho \vp(\psi(\pi(y^*)))\vp(\psi(\pi(y)))\V+\V t-\vp\V_{cb}(1+\V\vp\V_{cb})\Big)\\
&\\
&= \V(q_\rho t|_{pB})^{-1}\V_{cb}\Big(\V\rho(1-y^*y)\V+\V t-\vp\V_{cb}(1+\V\vp\V_{cb})\Big) \quad (\textrm{ by }(\ref{eq:T3.1.2}))\\
&\\
&\leq \Big(\frac{2(2-f(\lambda'))^2-1}{2-f(\lambda')}\Big)^{-1}(f(\lambda')-1)(1+f(\lambda')).\\
\end{align*}
The last line follows because $\rho(y)$ is an isometry, by (\ref{eq:T3.1.2}), (\ref{eq:T3.1.3}) and (\ref{eq:T3.1.6}).
Hence $g(f(\lambda')>0,$ a contradiction. 
\end{proof}
In \cite{Junge03} it was asked if there were any nuclear, quasidiagonal $C^*$-algebras $A$ with $\mathcal{OL}_\infty(A)>1.$
We give some examples of such algebras. Let $\lambda'$ satisfy (\ref{def:D3.0}).

\begin{example} \label{ex:E3.4} \end{example} Let $s\in B(\ell^2)$ denote the unilateral shift.  Then, $A=C^*(s\oplus s^*)$ is nuclear and quasidiagonal.  Applying Theorem \ref{thm:T3.3} with $\rho:A\rightarrow C^*(s),$  $\sigma:A\rightarrow C^*(s^*)$ and $(x_n)=(s\oplus s^*)^n,$ we have $\mathcal{OL}_\infty(A)>\lambda'.$ 

Before the author obtained Theorem \ref{thm:T3.3}, Narutaka Ozawa outlined for me an alternate proof that $\mathcal{OL}_\infty(C^*(s\oplus s^*))>1.$ The proof was based on the observation that for any finite-dimensional $C^*$-algbera $B$ and any partial isometry $v\in B$, we have $1-vv^*$ Murray-von Neumann equivalent to $1-v^*v.$  But, if we let $(e_{ij})$ denote matrix units for $B(\ell^2)$ and $T=s\oplus s^*,$  then $T$ is a partial isometry and $1-T^*T=0\oplus e_{11}$ and $1-TT^*=e_{11}\oplus 0.$
So, $1-T^*T$ and $1-TT^*$ are not Murray-von Neumann equivalent in $C^*(s\oplus s^*)''=B(\ell^2)\oplus B(\ell^2).$  One can use these facts and arguments similar to Lemmas \ref{lem:L2.1} and \ref{lem:Kirchperb} to show that $\mathcal{OL}_\infty(C^*(s\oplus s^*)>1.$

\begin{example} \label{ex:E3.5} (\textup{\cite[Example IX.11.2]{Davidson96})} \end{example}
Let $D_1$ and $D_2$ be commuting diagonal operators with joint essential spectrum $\mathbb{RP}^2$, the real projective plane. Let $s$ be as in Example \textup{\ref{ex:E3.4}}.  Set 
\begin{equation*}
A=C^*(s\oplus N_1, 0\oplus N_2).
\end{equation*}
Then, A is easily seen to be an extension of nuclear $C^*$-algebras and hence is nuclear. As is shown in \cite{Davidson96},  $A$ is quasidiagonal.  Applying  Theorem \textup{\ref{thm:T3.3}}, with $\rho:A\rightarrow C^*(s)$, $\sigma:A\rightarrow C^*(N_1,N_2)$ and $(x_n)=(s\oplus N_1)^n$, we have  $\mathcal{OL}_\infty(A)>\lambda'.$
%%%%%%%%%%%%%%%%%%%%%%%%%%%%%%%%%%%%%%%%%%%%%%%%%%%%%%%%%%%%%%%%%%%%%%%%%%%%%%%%%%%%%%%%%%%%%%%%%%%%%%%%%%%%
%%%%%%%%%%%%%%%%%%%%%%%%%%%%%%%%%%%% SECTION: PERMENANCE PROPERTIES      %%%%%%%%%%%%%%%%%%%%%%%%%%%%%%%%%%%
%%%%%%%%%%%%%%%%%%%%%%%%%%%%%%%%%%%%%%%%%%%%%%%%%%%%%%%%%%%%%%%%%%%%%%%%%%%%%%%%%%%%%%%%%%%%%%%%%%%%%%%%%%%%
\section{Permenance Properties}
We now investigate a couple permenance properties of $\mathcal{OL}_\infty.$ 

Let $B\subset A$ be nuclear $C^*$-algebras with $\mathcal{OL}_\infty(A)=1.$  In general, we do not have $\mathcal{OL}_\infty(B)=1.$
Indeed let $B=C^*(s\oplus s^*)$ from Example \ref{ex:E3.4}.  It is easy to see that $s\oplus s^*$ is a compact perturbation of a unitary operator $u\in B(\ell^2\oplus\ell^2).$  Let $A=C^*(u)+K(\ell^2\oplus \ell^2).$  Then $A$ is nuclear and inner quasidiagonal \cite[Defnition 2.2]{Blackadar01}.
By \cite[Theorem 4.5]{Blackadar01}, $A$ is a strong NF algebra, which is a rigid $\mathcal{OL}_\infty$-space by \cite[Theorem 6.1.1]{Blackadar97}.  Hence $\mathcal{OL}_\infty(A)=1,$ but $\mathcal{OL}_\infty(B)>1.$

In contrast to this situation, if $B$ is an ideal we have the following:
\begin{theorem} \label{thm:ideals} Let $A$ be a unital $C^*$-algebra and $J$ an ideal of $A.$  If $\mathcal{OL}_\infty(A)=1,$ then $\mathcal{OL}_\infty(J)=1.$
\end{theorem}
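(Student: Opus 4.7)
The plan is to show $\mathcal{OL}_\infty(J)\leq 1+\epsilon$ for every $\epsilon>0$. Fix a finite set $F=\{x_1,\ldots,x_n\}\subset J$. The strategy is to apply the $\mathcal{OL}_\infty$-approximation of $A$ to an enlargement of $F$, and then compress the resulting approximation into $J$ by a positive approximate-unit element of $J$.

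First I would pick a positive contraction $e\in J$, drawn from an approximate unit of $J$, so that $\V e^{1/2}x_ie^{1/2}-x_i\V<\eta$ for all $i$, with $\eta>0$ a small parameter to be fixed. Since $\mathcal{OL}_\infty(A)=1$, I apply Lemma~\ref{lem:L2.1} to $F\cup\{e,e^{1/2}\}\subset A$ with $\lambda$ close to $1$ to produce a finite-dimensional $C^*$-algebra $B$, a ucp map $\psi:A\to B$, and a unital self-adjoint map $\vp:B\to A$ with $F\cup\{e,e^{1/2}\}\subset\vp(B)$, $\psi\vp=id_B$, $\vp\psi|_{F\cup\{e,e^{1/2}\}}=id$, and $\V\vp\V_{cb}-1$ as small as desired. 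Set $a=\psi(e^{1/2})\in B^+$; then $\vp(a)=e^{1/2}$, and $\V a\V\geq\V\vp\V_{cb}^{-1}$ is close to $1$, so the spectral projection $p$ of $a$ on $[1-\eta',1]$ is nonzero for every small $\eta'$. Let $B':=pBp$, a finite-dimensional $C^*$-subalgebra of $B$, and define
\begin{equation*}
\tilde\vp:B'\to J,\qquad \tilde\vp(b):=e^{1/2}\vp(b)e^{1/2}.
\end{equation*}
The range lies in $J$ since $e\in J$, and $\V\tilde\vp\V_{cb}\leq\V\vp\V_{cb}$ is close to $1$.

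The key step will be the matching lower bound $\V\tilde\vp^{(k)}(b)\V\geq(1-O(\sqrt{\V\vp\V_{cb}-1}+\eta'))\V b\V$ for $b\in M_k(B')$. Nuclearity of $A$ (which follows from $\mathcal{OL}_\infty(A)<\infty$) makes $A''$ injective, so Lemma~\ref{lem:Kirchperb} applied to $\vp:B\to A\subset A''$ delivers a ucp map $t:B\to A''$ with $\V t-\vp\V_{cb}\leq\V\vp\V_{cb}-1$. By injectivity of the finite-dimensional $B$, I extend $\psi$ to a ucp map $\tilde\psi:A''\to B$; then $\tilde\psi t$ is ucp and $\V\tilde\psi t-id_B\V_{cb}\leq\V\vp\V_{cb}-1$. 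Combining Choi's inequality (Lemma~\ref{lem:L2.3}) for $t$ and for $\tilde\psi$ with $\tilde\psi t(a^2)\approx a^2$ and $\tilde\psi(t(a))\approx a$ forces both defects $\tilde\psi t(a^2)-\tilde\psi(t(a)^2)$ and $\tilde\psi(t(a)^2)-\tilde\psi(t(a))^2$ to have norm $O(\V\vp\V_{cb}-1)$; after replacing $t(a)$ by $\vp(a)=e^{1/2}$ (an error of $O(\V\vp\V_{cb}-1)$), this yields
\begin{equation*}
0\leq\tilde\psi(e)-a^2=\tilde\psi(\vp(a)^2)-\tilde\psi(\vp(a))^2=O(\V\vp\V_{cb}-1).
\end{equation*}
In the Stinespring dilation $\tilde\psi(\cdot)=V^*\pi(\cdot)V$ this translates into the approximate bimodule identity $\tilde\psi(\vp(a)y\vp(a))\approx a\tilde\psi(y)a$ for $y\in A''$, with error $O(\sqrt{\V\vp\V_{cb}-1})\V y\V$. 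Taking $y=\vp(b)$ for $b\in B'$ gives $\tilde\psi\tilde\vp(b)\approx aba$, and by functional calculus on $B'=pBp$ one has $aba\approx pbp=b$ with error $O(\eta')$. Complete contractivity of $\tilde\psi$ then supplies the required lower bound. The main technical obstacle is precisely this Schwarz-defect bound; it is where nuclearity of $A$ (via Lemma~\ref{lem:Kirchperb}) enters.

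To finish, set $b_i:=p\psi(x_i)p\in B'$. The same approximate bimodule identity applied to $y=\vp\psi(x_i)=x_i$, combined with $e^{1/2}x_ie^{1/2}\approx x_i$, gives $\tilde\vp(b_i)\approx x_i$. An application of Lemma~\ref{lem:Pisierperb} inside the finite-dimensional subspace $F+\tilde\vp(B')\subset J$ then produces a complete isomorphism $w$ sending $\tilde\vp(b_i)$ to $x_i$ with $\V w\V_{cb}\V w^{-1}\V_{cb}$ arbitrarily close to $1$. Setting $F':=w(\tilde\vp(B'))\subset J$, we have $F\subset F'$ and $d_{cb}(F',B')<1+\epsilon$. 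Since $\epsilon>0$ was arbitrary, $\mathcal{OL}_\infty(J)=1$.
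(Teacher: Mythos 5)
Your route is genuinely different from the paper's: the paper works in $A^{**}$, splits along the central projection covering $J^{**}$, uses Lemma~\ref{lem:L2.2} to select the summands of $B$ on which the $J^{**}$-component of the perturbed map is still almost completely isometric, and then must invoke \cite[Theorem 4.3]{Junge03} to descend from $J^{**}$ back to $J$. You instead stay inside $J$ from the start by compressing with an approximate-unit element $e$, and your main technical engine checks out: the composition $\tilde\psi t$ being close to $id_B$ does force $0\le\psi(e)-a^2=O(\V\vp\V_{cb}-1)$ (the direct Schwarz inequality for $\psi$ alone would not give the upper bound, so the detour through the ucp perturbation $t$ is genuinely needed and correctly deployed), and the resulting Stinespring estimate $\tilde\psi(e^{1/2}ye^{1/2})\approx a\tilde\psi(y)a$ with error $O(\sqrt{\V\vp\V_{cb}-1})\V y\V$ holds at all matrix levels, so the lower bound for $\tilde\vp$ on $M_k(pBp)$ follows as you say (using $\V ap-p\V\le\eta'$). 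If completed, this is arguably more self-contained than the paper's argument, since it avoids the local-reflexivity-type descent theorem entirely.

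There is, however, a concrete gap at the step ``$\tilde\vp(b_i)\approx x_i$.'' A single application of the bimodule identity with $y=x_i$ gives only $\psi(x_i)\approx a\psi(x_i)a$, whereas $b_i=p\psi(x_i)p$, and for $\tilde\vp(b_i)=e^{1/2}\vp(p\psi(x_i)p)e^{1/2}$ to be close to $x_i$ you need $\psi(x_i)\approx p\psi(x_i)p$. Passing from the $a(\cdot)a$-compression to the $p(\cdot)p$-compression is not immediate, because $\V(1-p)a\V\le 1-\eta'$ is bounded away from $0$ but not small. The statement is still true, but requires an extra argument: iterate to get $\V\psi(x_i)-a^n\psi(x_i)a^n\V\le n\delta_0$ (where $\delta_0$ is the one-step error) and combine with $\V(1-p)a^n\V\le(1-\eta')^n$, choosing $n\approx \eta'^{-1}\log(1/\delta_0)$ to conclude $\V(1-p)\psi(x_i)\V=O(\eta'^{-1}\delta_0\log(1/\delta_0))$, which tends to $0$ for fixed $\eta'$ as $\V\vp\V_{cb}\to1$ and $\eta\to0$. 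This also pins down the order of quantifiers you must respect: $\eta'$ has to be fixed \emph{before} $\V\vp\V_{cb}-1$ and $\eta$ are sent to $0$ (and $\eta'$ cannot be taken arbitrarily small independently of these, since $p\neq0$ requires $1-\eta'\le\V a\V$, and $\V a\V$ is only bounded below by roughly $(1-\eta)^{1/2}/\V\vp\V_{cb}$). With that iteration inserted and the quantifiers ordered as above, your proof closes.
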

\begin{proof} Let $\epsilon>0$ and $E\subset J$ a finite dimensional subspace.  Without loss of generality suppose $E$ has a basis of positive elements $x_1,...,x_n\in E$ with $\V x_i\V=1$ for each $i=1,...,n.$ Let $\widehat{x}_1,...,\widehat{x}_n\in J^*$ such that
$\la x_i,\widehat{x}_j\ra=\delta_{i,j}.$ Set $M=\sum \V\widehat{x}_i\V.$ 

Define 
\begin{equation*}
\delta_1(\delta)=(1-\delta)-(1-\sqrt{\delta})^{-1}(1-(1-\delta)^2)) \quad\textrm{ for }\quad 0\leq\delta<1.
\end{equation*}
Note that 
$\delta_1(\delta)\rightarrow1$ as $\delta\rightarrow 0.$

Choose $\delta>0$ small enough so $2\sqrt{\delta}\leq \epsilon/M$ and $(2\delta_1(\delta)-1)^{-1}\leq 1+\epsilon.$

By Lemma \ref{lem:L2.1} we obtain a finite-dimensional $C^*$-algebra $B=\oplus_{i=1}^N M_{n_i}$, a ucp map $\psi:A\rightarrow B$ and a unital, self-adjoint map $\vp:B\rightarrow A$ with $\V\vp\V_{cb}\leq 1+\delta$ such that $\vp\psi|_E=id_E$ and $\psi\vp=id_B.$ 

We will construct a finite-dimensional subspace $E\subset\widetilde{F}\subset J^{**}$ such that $\widetilde{F}$ is almost completely isometric to a finite-dimensional $C^*$-algebra and then apply a key theorem from \cite{Junge03} to obtain a subspace $E\subset F\subset J$ such that $F$ is almost completely isometric to a finite-dimensional $C^*$-algebra.

Since $A^{**}$ is injective, Lemma \ref{lem:Kirchperb} provides a ucp map $t:B\rightarrow A^{**}$ such that
$\V t-\vp\V_{cb}\leq\delta.$ Then $t$ is injective and $\V t^{-1}\V_{cb}\leq (1-\delta)^{-1}.$

Let $p\in A^{**}$ be the central projection such that $pA^{**}=J^{**}.$ Let $t_1:B\rightarrow J^{**}$ be defined by $t_1(x)=pt(x)$ and 
$t_2:B\rightarrow A^{**}$ by $t_2(x)=(1-p)t(x).$

Returning to the $C^*$-algebra $B$, let $q_1,...,q_N\in B$ be the minimal central projections such that $q_iB\cong M_{n_i}.$ Let
\begin{equation}
\mathcal{I}=\{1\leq i\leq N: \sup_{1\leq j\leq n}\V q_i\psi(x_j)\V\leq\sqrt{\delta}\} \label{eq:Idef} 
\end{equation}
Set $q=\sum_{i\not\in\mathcal{I}} q_i$ and $C=qB.$  

We now  show that $t_1:C\rightarrow J^{**}$ is injective with $\V t_1|_C^{-1}\V_{cb}\leq (2\delta_1(\delta)-1)^{-1}.$
We first show $t_1$ restricted to each summand of $C$ is almost a complete isometry.

To this end, let  $\mathcal{I}^c=\{1,...,N\}\setminus \mathcal{I}$ and $j\in \mathcal{I}^c.$  Then there is an $x_i$ such that $\V q_j\psi(x_i)\V>\sqrt{\delta}.$  Recall that $\vp\psi(x_i)=x_i$ and since $x_i\in J,$ $(1-p)x_i=0.$  Since $\psi$ is ucp, $\V q_j\psi(x_i)\V^{-1}q_j\psi(x_i)\in q_jB\cong M_{n_j}$ is norm 1 and positive. Hence,
\begin{align*}
 \V t_2(\V q_j\psi(x_i)\V^{-1}q_j\psi(x_i))\V&\leq \V q_j\psi(x_i)\V^{-1}\V t_2(\psi(x_i))\V\\
&=\V q_j\psi(x_i)\V^{-1}\V(1-p)t\psi(x_i)\V\\
&\leq \V q_j\psi(x_i)\V^{-1}\Big(\V(1-p)\vp\psi(x_i)\V+\delta\Big)\\
&\leq \sqrt{\delta}.
\end{align*}
We apply Lemma \ref{lem:L2.2} to $t_1:q_jB\rightarrow J^{**}$ with 
\begin{equation*}
s=\V t_2(\V q_j\psi(x_i)\V^{-1}q_j\psi(x_i))\V\leq\sqrt{\delta}\quad \textrm{and} 
\end{equation*}
\begin{equation*}
r^{-1}=\V t|_{q_jB}^{-1}\V_{cb}\leq\V t|_B^{-1}\V_{cb}\leq (1-\delta)^{-1}
\end{equation*}
to obtain 
\begin{equation}
\V t_1|_{q_jB}^{-1}\V_{cb}\leq \delta_1(\delta)^{-1} \quad \textrm{ for all }\quad j\in\mathcal{I}^c. \label{eq:t_1inj}
\end{equation}
Now,  let $k\in\mathbb{N}$ be arbitrary and $a=\sum_{j\not\in\mathcal{I}}(1_k\otimes q_jk)a\in M_k\otimes C$ be positive. 
Since $t_1$ is completely positive, by (\ref{eq:t_1inj}),
\begin{equation*}
\V t_1^{(k)}(a)\V \geq \sup_{j\not\in\mathcal{I}}\V t_1^{(k)}((q_j\otimes 1_k)a)\V\geq \delta_1(\delta)\V a\V. \label{eq:t_1injonC}
\end{equation*}
By Lemma \ref{lem:L2.1.1}, $t_1:C\rightarrow J^{**}$ is injective with $\V t_1^{-1}\V_{cb}\leq (2\delta_1(\delta)-1)^{-1}.$

$t_1(C)$ does not necessarily contain $x_1,...,x_n.$ We fix this with a perturbation.  Since $px_i=x_i=\vp\psi(x_i)$ for $i=1,...,n$, it follows from (\ref{eq:Idef}) that
\begin{align*}
\V x_i-t_1(\psi(x_i)q)\V&\leq \V x_i-t_1\psi(x_i)\V+\sqrt{\delta}\\
&= \V x_i-pt\psi(x_i)\V+\sqrt{\delta}\\
&\leq \V x_i-p\vp\psi(x_i)\V+\delta+\sqrt{\delta}\\
&=\delta+\sqrt{\delta}.
\end{align*}
Set $y_i=t_1(\psi(x_i)q)\in J^{**}$ for $i=1,...,n.$  Then,
\begin{equation*}
\sum_{i=1}^n \V \widehat{x}_i\V\textrm{ }\V x_i-y_i\V\leq M(2\sqrt{\delta})\leq\epsilon.
\end{equation*}

By Lemma \ref{lem:Pisierperb} there is a complete isomorphism $w:J^{**}\rightarrow J^{**}$ such that $w(y_i)=x_i$ for $i=1,...,n$ and
$\V w\V_{cb}\textrm{ }\V w^{-1}\V_{cb}\leq (1+\epsilon)/(1-\epsilon).$

Let $\widetilde{F}=wt_1(C)\subset J^{**}.$  Then $E\subset \widetilde{F}$ and 
\begin{equation*}
d_{cb}(\widetilde{F},C)\leq \frac{(1+\epsilon)}{(1-\epsilon)}(2\delta_1(\delta)-1)^{-1}< \frac{(1+\epsilon)^2}{1-\epsilon}.
\end{equation*}
By \cite[Theorem 4.3]{Junge03} there is a subspace $F\subset J$ such that $E\subset F$ and $d_{cb}(F,C)<(1+\epsilon)^2(1-\epsilon)^{-1}.$  

Since $\epsilon>0$ was arbitrary, it follows that $\mathcal{OL}_\infty(J)=1.$ 
\end{proof}
\begin{remark} It is not known if Theorem \ref{thm:ideals} holds in general, i.e. if $J$ is an ideal of $A$ do we always have $\mathcal{OL}_\infty(J)\leq\mathcal{OL}_\infty(A)$? 
\end{remark}
Finally, we need the following Proposition for Section \ref{sec:Irreducible Representations}. 
For $C^*$-algebras $A$ and $B$, let $A\odot B$ denote the algebraic tensor product of $A$ and $B.$
%%%%%%%%%%%%%%%%%%%%%%%%%%%%%%%%%%%%%%%%%%%%%%%%%%%%%%%%%%%%%%%%%%%%%%%%%%%%%%%%%%%%%%%%%%%%%%%%%%%%
%%%%%%%%%%%%%%%%%%%%%%%%%%%%%%%%%%%%% PROPOSITION ON TENSOR PRODUCTS  %%%%%%%%%%%%%%%%%%%%%%%%%%%%%%
%%%%%%%%%%%%%%%%%%%%%%%%%%%%%%%%%%%%%%%%%%%%%%%%%%%%%%%%%%%%%%%%%%%%%%%%%%%%%%%%%%%%%%%%%%%%%%%%%%%%
\begin{proposition} \label{prop:tpOL} Let $A_1$ and $A_2$ be nuclear $C^*$-algebras.  Then 
\begin{equation*}
 \mathcal{OL}_\infty(A_1\otimes A_2)\leq \mathcal{OL}_\infty(A_1)\mathcal{OL}_\infty(A_2).
\end{equation*} 
\end{proposition}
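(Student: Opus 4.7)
The plan is to fix $\lambda_i > \mathcal{OL}_\infty(A_i)$ for $i=1,2$ and prove $\mathcal{OL}_\infty(A_1\otimes A_2)\leq \lambda_1\lambda_2$; letting $\lambda_i\downarrow \mathcal{OL}_\infty(A_i)$ then gives the desired inequality. So fix a finite-dimensional subspace $E\subset A_1\otimes A_2$ and $\epsilon>0$. I aim to produce a finite-dimensional $C^*$-algebra $B$ and a subspace $E\subset F\subset A_1\otimes A_2$ with $d_{cb}(F,B)\leq (1+\epsilon)\lambda_1\lambda_2$.

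The first step is to reduce to the case where $E$ lies in the algebraic tensor product $A_1\odot A_2$. Pick a basis $x_1,\ldots,x_n$ of $E$ with a biorthogonal system $\widehat{x}_1,\ldots,\widehat{x}_n$, set $M=\sum_i \V\widehat{x}_i\V$, and use density of $A_1\odot A_2$ in $A_1\otimes A_2$ to choose $y_i\in A_1\odot A_2$ with $M\cdot\max_i\V x_i-y_i\V<\delta$, where $\delta>0$ is a small constant to be chosen. Lemma \ref{lem:Pisierperb} applied to the ambient operator space $X=A_1\otimes A_2$ then yields a complete isomorphism $w:A_1\otimes A_2\to A_1\otimes A_2$ with $w(y_i)=x_i$ and $\V w\V_{cb}\V w^{-1}\V_{cb}\leq(1+\delta)/(1-\delta)$. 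Each $y_i$ is a finite sum of elementary tensors, so there exist finite-dimensional $E_i\subset A_i$ with $\mathrm{span}\{y_1,\ldots,y_n\}\subset E_1\odot E_2$.

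Next I apply the definition of $\mathcal{OL}_\infty(A_i)<\lambda_i$ to each $E_i$ to obtain finite-dimensional $C^*$-algebras $B_i$, subspaces $E_i\subset F_i\subset A_i$, and complete isomorphisms $\vp_i:F_i\to B_i$ with $\V\vp_i\V_{cb}\V\vp_i^{-1}\V_{cb}<\lambda_i$. Because the minimal tensor product is injective on operator spaces, $F_1\otimes F_2$ embeds completely isometrically into $A_1\otimes A_2$ and contains $E_1\odot E_2$. Set $F=w(F_1\otimes F_2)$ and $B=B_1\otimes B_2$; then $B$ is a finite-dimensional $C^*$-algebra and $E=w(\mathrm{span}\{y_i\})\subset F$. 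The tensor map $\vp_1\otimes\vp_2:F_1\otimes F_2\to B$ is a complete isomorphism satisfying $\V\vp_1\otimes\vp_2\V_{cb}\leq \V\vp_1\V_{cb}\V\vp_2\V_{cb}$ and analogously for its inverse $\vp_1^{-1}\otimes\vp_2^{-1}$, using the standard multiplicativity of cb norms on the minimal tensor product. Composing with $w$ yields
\begin{equation*}
d_{cb}(F,B)\leq \V w\V_{cb}\V w^{-1}\V_{cb}\cdot d_{cb}(F_1\otimes F_2,B)< \frac{1+\delta}{1-\delta}\,\lambda_1\lambda_2,
\end{equation*}
which is at most $(1+\epsilon)\lambda_1\lambda_2$ if $\delta$ is chosen small enough in terms of $\epsilon$.

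The bulk of the argument is bookkeeping, and the only genuine technical point is the first step: a general finite-dimensional $E\subset A_1\otimes A_2$ need not sit inside any $E_1\odot E_2$, so one cannot directly tensor the factorwise $\mathcal{OL}_\infty$ data. Lemma \ref{lem:Pisierperb} handles this by perturbing $E$ to a subspace of $A_1\odot A_2$ via a completely bounded automorphism with controlled distortion, and this distortion is absorbed harmlessly into the $\epsilon$ slack before taking the infimum.
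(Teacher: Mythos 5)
Your proof is correct and follows essentially the same route as the paper: factorwise $\mathcal{OL}_\infty$ data tensored together via the multiplicativity of cb norms on the minimal tensor product, with the injectivity of $\otimes_{\min}$ identifying $F_1\otimes F_2$ inside $A_1\otimes A_2$. The only difference is that you make explicit, via the perturbation Lemma \ref{lem:Pisierperb}, the reduction from a general finite-dimensional $E\subset A_1\otimes A_2$ to one sitting in the algebraic tensor product $A_1\odot A_2$ --- a step the paper's proof compresses into the single phrase ``since $A_1\odot A_2$ is dense in $A_1\otimes A_2$'' --- so your write-up is in fact the more complete of the two.
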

\begin{proof} Let $E\subset A_1\odot A_2$ be a finite dimensional subspace and $\epsilon>0.$ For $i=1,2$  choose finite dimensional subspaces  $F_i\subset A_i$, finite-dimensional $C^*$-algebras $B_i$ and linear isomorphisms  $\vp_i:F_i\rightarrow B_i,$
such that 
\begin{equation*}
\V\vp_i\V_{cb}\V\vp_i^{-1}\V_{cb}\leq\mathcal{OL}_\infty(A_i)+\epsilon \quad \textrm{ and }\quad E\subset F_1\odot F_2.
\end{equation*}
Let $\otimes_{min}$ denote the minimal operator space tensor product. Recall that for $C^*$-algebras the minimal operator space tensor product coincides with the minimal $C^*$-tensor product (see \cite[Page 228]{Pisier03}).  Furthermore by \cite[2.1.3]{Pisier03},
\begin{equation*}
\V \vp_1\otimes\vp_2: F_1\otimes_{min}F_2\rightarrow B_1\otimes B_2\V_{cb}\leq  \V\vp_1\V_{cb}\V\vp_2\V_{cb}
\end{equation*}
We have a similar inequality for $(\vp_1\otimes\vp_2)^{-1}=\vp_1^{-1}\otimes\vp_2^{-1}.$
Since $A_1\odot A_2$ is dense in $A_1\otimes A_2,$ it follows that 
\begin{equation*}
 \mathcal{OL}_\infty(A_1\otimes A_2)\leq \inf_{\epsilon>0}(\mathcal{OL}_\infty(A_1)+\epsilon)(\mathcal{OL}_\infty(A_2)+\epsilon)=\mathcal{OL}_\infty(A_1)\mathcal{OL}_\infty(A_1).
\end{equation*}
\end{proof}
\section{Irreducible Representations and $\mathcal{OL}_\infty$} \label{sec:Irreducible Representations}
This section contains the main theorem (Theorem \ref{thm:sepfam}).  We first recall the necessary definitions and prove some preliminary lemmas.
%%%%%%%%%%%%%%%%%%%%%%%%%%%%%%%%%%%%%%%%%%%%%%%%%%%%%%%%%%%%%%%%%%%%%%%%%%%%%%%%%%%%%%%%%%%%%%%%%%%%
%%%%%%%%%%%%%%%%%%%%%%%%%%%%%%%%%%%%% DEFINITION ISOMETRIES    %%%%%%%%%%%%%%%%%%%%%%%%%%%%%%%%%%%%%
%%%%%%%%%%%%%%%%%%%%%%%%%%%%%%%%%%%%%%%%%%%%%%%%%%%%%%%%%%%%%%%%%%%%%%%%%%%%%%%%%%%%%%%%%%%%%%%%%%%%
\begin{definition}
Let A be a unital $C^*$-algebra.  Recall that $x\in A$ is an isometry if $x^*x=1.$  An isometry is called proper, if $xx^*\neq1.$
$A$ is called finite if it contains no proper isometries.  $A$ is called stably finite if $M_n\otimes A$ is finite for every $n\in\mathbb{N}.$  We will call a representation $\pi$ of A finite (resp. stably finite) if $A/\ker(\pi)$ is finite (resp. stably finite.) 
\end{definition}
%%%%%%%%%%%%%%%%%%%%%%%%%%%%%%%%%%%%%%%%%%%%%%%%%%%%%%%%%%%%%%%%%%%%%%%%%%%%%%%%%%%%%%%%%%%%%%%%%%%%
%%%%%%%%%%%%%%%%%%%%%%%%%%%%%%%%%%%%% LEMMA PURE ISOMETRIES    %%%%%%%%%%%%%%%%%%%%%%%%%%%%%%%%%%%%%
%%%%%%%%%%%%%%%%%%%%%%%%%%%%%%%%%%%%%%%%%%%%%%%%%%%%%%%%%%%%%%%%%%%%%%%%%%%%%%%%%%%%%%%%%%%%%%%%%%%%
\begin{lemma} \label{lem:unitarymult}
Let $H$ be a separable Hilbert space and $x\in B(H)$ be a proper isometry.  Then there is a unitary $u\in B(H)$ such that
$(ux)^n(ux)^{*n}\rightarrow0$ strongly.
\end{lemma}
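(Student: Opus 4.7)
The plan is to construct $u$ so that $ux$ becomes, after a unitary relabelling of a basis, simply a unilateral shift of some multiplicity on $H$. Since such a shift is pure (the range projections $s^n s^{*n}$ are the projections onto the tail of the basis, which tend strongly to $0$), the desired convergence $(ux)^n(ux)^{*n}\rightarrow 0$ will be automatic.

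Since $x$ is a proper isometry, $K=(xH)^\perp=\ker x^*$ is a nonzero closed subspace; let $m=\dim K\in\{1,2,\ldots,\aleph_0\}$. Note that $H$ must be infinite-dimensional, because a finite-dimensional Hilbert space has no proper isometries, so $\dim H=\aleph_0$. Since $m\cdot\aleph_0=\aleph_0$ for any $m\in\{1,\ldots,\aleph_0\}$, I may fix an orthonormal basis $\{e_{i,n}:i\in I,\,n\geq 0\}$ of $H$ with $|I|=m$.

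Next I will build $u$ from two matching identifications. The set $\{xe_{i,n}\}_{i,n}$ is an orthonormal basis of the closed subspace $xH$, whose complement $(xH)^\perp=K$ has dimension $m$. The set $\{e_{i,n}\}_{i,\,n\geq 1}$ is an orthonormal basis of $\mathrm{span}\{e_{i,0}:i\in I\}^\perp$, whose complement also has dimension $m$. The assignment $xe_{i,n}\mapsto e_{i,n+1}$ therefore extends uniquely to a unitary $xH\to\mathrm{span}\{e_{i,0}\}^\perp$, and any unitary identification $K\to\mathrm{span}\{e_{i,0}:i\in I\}$ (available because both spaces have dimension $m$) combines with it to give a unitary $u\in B(H)$.

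With this construction, $(ux)e_{i,n}=u(xe_{i,n})=e_{i,n+1}$ for all $i,n$, so $ux$ is the shift of multiplicity $m$ on the basis $\{e_{i,n}\}$. Consequently $(ux)^n(ux)^{*n}$ is the orthogonal projection onto $\overline{\mathrm{span}}\{e_{i,k}:i\in I,\,k\geq n\}$, and for any $v\in H$ the quantity $\|(ux)^n(ux)^{*n}v\|^2=\sum_{i\in I}\sum_{k\geq n}|\langle v,e_{i,k}\rangle|^2$ is a tail of the convergent series $\sum_{i,k}|\langle v,e_{i,k}\rangle|^2=\|v\|^2$, hence tends to $0$. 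The only non-routine step is verifying that an orthonormal basis indexed by $I\times\mathbb{Z}_{\geq 0}$ exists for the right cardinality $m$; once that bookkeeping is in place, $u$ is assembled from two obvious unitary identifications between subspaces of matching dimensions.
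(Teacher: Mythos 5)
Your proof is correct, and it takes a genuinely different route from the paper's. The paper invokes the Wold--von Neumann decomposition to write $x=s\oplus w$ on $H=K\oplus K^{\perp}$ with $s$ a unilateral shift and $w$ unitary, and then constructs $u$ to ``mix'' the unitary summand into the shift, with a case analysis according to whether $K^{\perp}$ is infinite- or finite-dimensional (a swap of two infinite-dimensional copies in the first case, a finite permutation of basis vectors in the second), followed by an explicit computation of $(ux)^{n}(ux)^{*n}$ in each case. You instead bypass the Wold decomposition entirely: since $\ker x^{*}$ and $\overline{\mathrm{span}}\{e_{i,0}\}$ have the same dimension $m$, and $xH$ and $\overline{\mathrm{span}}\{e_{i,n}:n\geq1\}$ are matched by $xe_{i,n}\mapsto e_{i,n+1}$, the two identifications assemble into a unitary $u$ with $ux$ equal to the pure shift of multiplicity $m$, for which the strong convergence of the range projections to $0$ is immediate. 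What your approach buys is uniformity (no cases) and self-containedness (only cardinality bookkeeping and the fact that an isometry carries an orthonormal basis to an orthonormal basis of its range, rather than the structure theorem for isometries); what the paper's approach makes visible is the concrete interaction between the shift part and the unitary part of $x$, which is not needed for the statement. All the steps you flag as ``routine'' indeed are: $\{xe_{i,n}\}$ is an orthonormal basis of the closed subspace $xH$ because $x$ is isometric, the two complements both have dimension $m$, and $(ux)^{n}(ux)^{*n}$ is the projection onto $\overline{\mathrm{span}}\{e_{i,k}:k\geq n\}$, whose application to a fixed vector is controlled by the tail of a convergent series.
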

\begin{proof}  It is well-known (see \cite[Theorem V.2.1]{Davidson96}) that there is a closed subspace $K\subset H$
such that relative to the decomposition $H=K\oplus K^\perp,$ we have $x=s\oplus w$ where $s$ is unitarily equivalent to $s_\alpha$, the unilateral shift of order $\alpha$ (for some $\alpha=1,2,...,\infty$), and $w$ is a unitary in $B(K^\perp).$  In particular $s^ns^{*n}\rightarrow0$ strongly in $B(K).$

Wihout loss of generality,  assume that $w=id_{K^\perp}.$

Suppose first that $K^\perp$ is infinite-dimensional. Since $x|_K$ is a proper isometry, $K$ is also infinite-dimensional. Since $H$ is separable, $K\cong K^\perp.$ Under this identification and relative to the decomposition $H=K\oplus K$, let
\begin{equation*}
u=\left[ \begin{array}{cc} 0 & 1\\ 1& 0\\ \end{array} \right]\in B(H).
\end{equation*}
Then for $n\in\mathbb{N}$ we have,
\begin{equation*}
(ux)^{2n}(ux)^{*2n}=\left[ \begin{array}{cc} s^ns^{*n}&0\\ 0& s^ns^{*n}\\ \end{array} \right]
\end{equation*}
and
\begin{equation*}
 (ux)^{2n+1}(ux)^{*(2n+1)}=\left[ \begin{array}{cc} s^ns^{*n}&0\\ 0& s^{(n+1)}s^{*(n+1)}\\ \end{array} \right].
\end{equation*}
Hence, $(ux)^n(ux)^{*n}\rightarrow0$ strongly.

Suppose now that $\textrm{dim}(K^\perp)=n<\infty.$  Let $\{f_1,...,f_n\}$ be an orthonormal basis for $K^\perp.$ Since $s$ is unitarily equivalent to a shift, let $e_1,...,e_n\in K$ be an orthonormal set such that 
\begin{equation*}
se_i=e_{i+1}\quad \textrm{ for }\quad i=1,...,n-1 \textrm{ and }e_1\perp x(H).
\end{equation*}
 Define $u\in B(H)$ by $u(e_i)=f_i$ and $u(f_i)=e_i$ for $i=1,...,n$ and $u(\eta)=\eta$ for $\eta\perp\textrm{span}\{e_1,...,e_n,f_1,...,f_n\}.$ Then $u$ is unitary and
\begin{equation*}
 (ux)^{2n}(H)\perp\textrm{span}\{e_1,...,e_n,f_1,...,f_n\}.
\end{equation*}
Hence for every $k\geq 2n$ we have $(ux)^{2n+k}=x^k(ux)^{2n}.$  Therefore,
\begin{equation*}
 (ux)^{2n+k}(ux)^{*(2n+k)}\leq (s^k\oplus 0_{K^\perp})(s^k\oplus 0_{K^\perp})^*\rightarrow0\textrm{ strongly}.
\end{equation*}
\end{proof}

We recall the following definitions (see \cite[Section 4.1]{Pedersen79}).

Let A be a $C^*$-algebra. An ideal $J$ of A is called primitive if J is the kernel of some (non-zero) irreducible representation of A.  Let \textup{Prim}(A) denote the set of all primitive ideals of A.  For a subset $X\subset \textup{Prim}(A)$, and an ideal J of A let
\begin{equation*}
\textup{ker}(X)=\bigcap_{I\in X}I\quad\textrm{ and }\quad \textup{hull}(J)=\{I\in\textup{Prim}(A):J\subset I\}.
\end{equation*}
Then \textup{Prim(}A) is a topological space with closure operation $X\mapsto$ \textup{hull(ker($X$))} (see \textup{\cite[Theorem 4.1.3]{Pedersen79}}).

%%%%%%%%%%%%%%%%%%%%%%%%%%%%%%%%%%%%%%%%%%%%%%%%%%%%%%%%%%%%%%%%%%%%%%%%%%%%%%%%%%%%%%%%%%%%%%%%%%%%%%%%%%%%%%%%%%%
%%%%%%%%%%%%%%%%%%%%%%%%%%%%% LEMMA DENSE KERNEL %%%%%%%%%%%%%%%%%%%%%%%%%%%%%%%%%%%%%%%%%%%%%%%%%%%%%%%%%%%%%%%%%%
%%%%%%%%%%%%%%%%%%%%%%%%%%%%%%%%%%%%%%%%%%%%%%%%%%%%%%%%%%%%%%%%%%%%%%%%%%%%%%%%%%%%%%%%%%%%%%%%%%%%%%%%%%%%%%%%%%%
The following is an easy consequence of \cite[Theorem 4.1.3]{Pedersen79}.

\begin{lemma} \label{lem:denseker}
 Let A be a $C^*$-algebra and $X\subset \textup{Prim}(A).$ Then $X$ is dense if and only if $\textup{ker($X$)}=\{0\}.$
\end{lemma}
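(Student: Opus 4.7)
\medskip

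\noindent\textbf{Proof proposal for Lemma 5.3.}

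The plan is to unwind the definition of the closure operation on $\textup{Prim}(A)$ and reduce the statement to two routine observations about the $\textup{ker}$/$\textup{hull}$ correspondence. Recall that by the definition recalled above, the closure of $X\subset\textup{Prim}(A)$ is $\overline{X}=\textup{hull}(\textup{ker}(X))$. Thus ``$X$ is dense'' translates directly into $\textup{hull}(\textup{ker}(X))=\textup{Prim}(A)$, i.e.\ every primitive ideal of $A$ contains $\textup{ker}(X)$.

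For the easy direction, assume $\textup{ker}(X)=\{0\}$. Since every ideal of $A$ contains $\{0\}$, we get $\textup{hull}(\textup{ker}(X))=\textup{hull}(\{0\})=\textup{Prim}(A)$, so $\overline{X}=\textup{Prim}(A)$ and $X$ is dense.

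For the converse, suppose $X$ is dense, so every $I\in\textup{Prim}(A)$ contains $\textup{ker}(X)$. Then
\begin{equation*}
\textup{ker}(X)\subset\bigcap_{I\in\textup{Prim}(A)} I.
\end{equation*}
The key input, which is part of the content of \cite[Theorem 4.1.3]{Pedersen79}, is that a $C^*$-algebra has a separating family of irreducible representations (apply GNS to a separating family of pure states); equivalently, the intersection of all primitive ideals is $\{0\}$. This forces $\textup{ker}(X)=\{0\}$.

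The only non-formal ingredient is this separation fact, which is standard and precisely the piece of \cite[Theorem 4.1.3]{Pedersen79} being invoked; everything else is a direct translation between $X\subset\textup{Prim}(A)$ and the ideal $\textup{ker}(X)$ via the Galois-type correspondence $\textup{ker}\leftrightarrow\textup{hull}$. There is no real obstacle, which is why the lemma is stated as an easy consequence.
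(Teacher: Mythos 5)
Your proof is correct and is exactly the argument the paper has in mind: the paper gives no written proof, simply calling the lemma an easy consequence of the hull--kernel description of the closure together with the standard fact that $\bigcap_{I\in\textup{Prim}(A)}I=\{0\}$, which is precisely the reduction you carry out.
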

%%%%%%%%%%%%%%%%%%%%%%%%%%%%%%%%%%%%%%%%%%%%%%%%%%%%%%%%%%%%%%%%%%%%%%%%%%%%%%%%%%%%%%%%%%%%%%%%%%%%%%%
%%%%%%%%%%%%%%%%%%%%%%%%%%%%%%%%%%%%%%%% THEOREM SEPARATING FAMILY OF IRREDUCIBLE REPS  %%%%%%%%%%%%%%%
%%%%%%%%%%%%%%%%%%%%%%%%%%%%%%%%%%%%%%%%%%%%%%%%%%%%%%%%%%%%%%%%%%%%%%%%%%%%%%%%%%%%%%%%%%%%%%%%%%%%%%%
\begin{theorem} \label{thm:sepfam}
Let A be a separable unital $C^*$-algebra with $\mathcal{OL}_\infty(A)<\lambda',$ where $\lambda'$ satisfies \textup{(\ref{def:D3.0})}.  Then A has a separating family of irreducible, stably finite representations.
\end{theorem}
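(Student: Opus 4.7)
The plan is proof by contradiction, applying Theorem \ref{thm:T3.3} to derive a violation of $\mathcal{OL}_\infty(A)<\lambda'$.

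Suppose $A$ lacks a separating family of irreducible stably finite representations. Set $X = \{I \in \textup{Prim}(A) : A/I \text{ is stably finite}\}$ and $J_{\mathrm{sf}} = \textup{ker}(X)$. Our assumption gives $J_{\mathrm{sf}} \neq 0$, so by Lemma \ref{lem:denseker} $X$ is not dense in $\textup{Prim}(A)$, and we can pick $I_0 \in \textup{Prim}(A) \setminus \overline{X}$; then $A/I_0$ is not stably finite. By Proposition \ref{prop:tpOL}, $\mathcal{OL}_\infty(M_k \otimes A) \leq \mathcal{OL}_\infty(A) < \lambda'$ for every $k$ (since $\mathcal{OL}_\infty(M_k)=1$), and the separating-family property transfers bijectively between $A$ and $M_k \otimes A$ via $\pi \mapsto \mathrm{id}_{M_k} \otimes \pi$ (stable finiteness of the quotient is preserved under tensoring with $M_k$). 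Hence, after passing to $M_k \otimes A$ for suitable $k$, we may assume $A/I_0$ itself contains a proper isometry $\bar v$; lift $\bar v$ to $v \in A$ with $\|v\|=1$, and fix an irreducible representation $\pi_0: A \to B(H_{\pi_0})$ with $\ker \pi_0 = I_0$, so $\pi_0(v)$ is a proper isometry on a separable Hilbert space.

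The aim is now to construct a unital faithful representation $\pi = \rho \oplus \sigma$ of $A$ with $\ker \sigma \neq \{0\}$ and a sequence $(x_n)$ in the unit sphere of $A$ with $\rho(x_n)$ an isometry and $\rho(x_n x_n^*) \to 0$ strongly; Theorem \ref{thm:T3.3} then yields $\mathcal{OL}_\infty(A) \geq \lambda'$, the desired contradiction. I would take $\rho$ to be (an inflation of) $\pi_0$, so that $\rho(v)$ remains a proper isometry, and $\sigma = \bigoplus_{\pi \text{ irr, sf}} \pi$, giving $\ker \sigma = J_{\mathrm{sf}} \neq 0$; faithfulness of $\rho \oplus \sigma$ reduces to $\ker \rho \cap J_{\mathrm{sf}} = 0$, which is arranged by enlarging $\rho$ (inside the subreps factoring through $A/I_0$) so that $\rho$ detects $J_{\mathrm{sf}}$. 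For the sequence $(x_n)$: apply Lemma \ref{lem:unitarymult} to the proper isometry $\pi_0(v) \in B(H_{\pi_0})$ to obtain a unitary $u \in B(H_{\pi_0})$ with $(u\pi_0(v))^n (u\pi_0(v))^{*n} \to 0$ strongly, and then use the strong density of $\pi_0(A)$ in $B(H_{\pi_0})$ (from irreducibility of $\pi_0$) together with Kadison's transitivity theorem to realize $(u\pi_0(v))^n$ as $\rho(x_n)$ with the exact isometry condition $x_n^* x_n - 1 \in \ker \rho$ enforced.

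The hardest step is this last construction: the unitary $u$ produced by Lemma \ref{lem:unitarymult} generically lies outside $\pi_0(A)$, so the sequence $(u\pi_0(v))^n$ is not automatically in $\rho(A)$, and forcing $\rho(x_n)$ to be an \emph{exact} isometry rather than only approximate requires careful interplay between the explicit shape of $u$ (a flip or permutation over the Wold decomposition of $\pi_0(v)$), Kadison's transitivity theorem, and possibly Voiculescu-type absorption after passing to the inflation $\pi_0^\infty$ where the flip can be implemented modulo absorbed perturbations. A subsidiary difficulty is arranging $\ker\rho \cap J_{\mathrm{sf}} = 0$ when $I_0$ is essential in $A$; this may require choosing $I_0$ more carefully (e.g., with $I_0 \cap J_{\mathrm{sf}} = 0$ when possible) or exploiting the separability of $A$ together with the abundance of irreducible representations of $J_{\mathrm{sf}}$.
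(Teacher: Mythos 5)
Your high-level strategy is the paper's (contradiction via Theorem \ref{thm:T3.3}, Lemma \ref{lem:unitarymult}, and a tensoring trick to pass from finite to stably finite), but the central difficulty is not resolved. Taking $\rho$ to be (an inflation of) a single irreducible $\pi_0$ with $\ker\pi_0=I_0$ forces $\ker\rho\supseteq I_0$, so faithfulness of $\rho\oplus\sigma$ with $\sigma$ the sum of all stably finite irreducibles requires $I_0\cap J_{\mathrm{sf}}=\{0\}$, for which there is no reason; your suggested fix of ``enlarging $\rho$ inside the subreps factoring through $A/I_0$'' cannot work, since any such enlargement still has kernel containing $I_0$. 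The obvious repair --- summing $\rho$ over \emph{all} non--stably-finite primitive ideals --- restores faithfulness but breaks the other hypothesis of Theorem \ref{thm:T3.3}: you then need one sequence $(x_n)$ that is simultaneously an isometry with $\rho(x_nx_n^*)\to 0$ in every summand, and different quotients get their proper isometries from different elements of $A$. The paper's resolution, absent from your proposal, is to attach to each $y$ that is a proper isometry in some primitive quotient the open set $O(y)=\{J:\Vert 1-y^*y+J\Vert<1/4,\ \Vert 1-yy^*+J\Vert>3/4\}$ and to run a Baire category argument over a countable dense family of such $y$ to find one for which $CO(y)=\mathrm{Prim}(A)\setminus\mathrm{hull}(\ker O(y))$ is \emph{not} dense. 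Then $\sigma=\oplus_{J\in CO(y)}\sigma_J$ has nonzero kernel, $\rho=\oplus_i\rho_i$ over a countable cofinal subset of $O(y)$ makes $\rho\oplus\sigma$ faithful (since $\ker O(y)\cap\ker CO(y)=\{0\}$), and the single element $yf(y^*y)$, after functional calculus, is an exact proper isometry in every summand of $\rho$ at once.

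The second gap is the production of exact isometries. Kadison transitivity only matches operators on finite-dimensional subspaces; it cannot realize $(u\pi_0(v))^n$ inside $\rho(A)$ nor enforce $x_n^*x_n-1\in\ker\rho$, and the Voiculescu-absorption route is not carried out. The paper instead observes that the $\rho_i$ are mutually inequivalent, so $\rho(A)''=\prod_i B(H_i)$ contains the unitary $u=\oplus_i u_i$ supplied by Lemma \ref{lem:unitarymult}; Kaplansky density gives unitaries $u_k\in\rho(A)$ converging strong-$*$ to $u$, each $u_{k_r}\rho(x)$ is then \emph{exactly} an isometry in $\rho(A)$ (unitary times isometry), and a diagonal extraction yields $(u_{k_r}\rho(x))^{n_r}(u_{k_r}\rho(x))^{*n_r}\to 0$ strongly. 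Finally, your reduction to the finite case via $M_k\otimes A$ ties $k$ to the single ideal $I_0$; the paper tensors with $K^1$ (unitized compacts) so that all amplifications are handled simultaneously, and uses the description of $\mathrm{Prim}(A\otimes K^1)$ to convert the finite-case conclusion into stable finiteness of the quotients of $A$.
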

\begin{proof}  We first show that $A$ has a separating family of irreducible, finite representations.

We assume that $A$ does not have a separating family of irreducible, finite representations and prove that $\mathcal{OL}_\infty(A)>\lambda'.$ Let
\begin{equation*}
\mathcal{Y}=\{y\in A:(\exists J\in \textrm{Prim}(A))(y+J\in A/J\textrm{ is a proper isometry)}\}.
\end{equation*}
Then $\mathcal{Y}$ is not empty.  For each $y\in\mathcal{Y},$ let
\begin{equation}
 O(y)=\{J\in\textrm{Prim}(A):\V (1-y^*y)+J\V<1/4\textrm{ and }\V(1-yy^*)+J\V>3/4\}\label{eq:Oydef},
\end{equation}
\begin{equation*}
CO(y)=\textrm{Prim}(A)\setminus \textrm{hull(ker}(O(y))). 
\end{equation*}
We will now prove the following statement:
\begin{equation}
(\exists y\in \mathcal{Y})(CO(y)\textrm{ is not dense in Prim}(A)) \label{eq:notdense} 
\end{equation}
(If Prim$(A)$ is Hausdorff, then (\ref{eq:notdense}) is immediate by \cite[Proposition 4.4.5]{Pedersen79}. But Prim$(A)$ is not Hausdorff in general.)

Since $A$ is separable, let $(y_n)\subset\mathcal{Y}$ be a dense sequence.  

Suppose that (\ref{eq:notdense}) does not hold. Then $CO(y_n)$ is a dense, open subset of Prim$(A)$ for each $n\in\mathbb{N}.$
Since Prim$(A)$ is a Baire space, (\cite[Theorem 4.3.5]{Pedersen79})  the following set is dense in Prim$(A)$:
\begin{equation*}
 X=\bigcap_{n=1}^\infty CO(y_n).
\end{equation*}
If there is a $J\in X$ such that $A/J$ is not finite, then there is a $y\in\mathcal{Y}$ such that $y+J$ is a proper isometry.
Then there is an $n\in\mathbb{N}$ such that 
\begin{equation*}
\V y_ny_n^*-yy^*\V+\V y_n^*y_n-y^*y\V<1/8.
\end{equation*}
  But this implies that 
\begin{equation*}
J\in O(y_n)\cap X\subset \textrm{hull(ker}(O(y_n)))\cap X=\emptyset.
\end{equation*}
Hence for every $J\in X, $ $A/J$ is finite. Since $X$ is dense, ker$(X)=\{0\}$ by Lemma \ref{lem:denseker}.  Then $A$ has a separating family of irreducible finite representations, a contradiction.  This completes the proof of (\ref{eq:notdense}).

We now build representations $\rho$ and $\sigma$ that satisfy Theorem \ref{thm:T3.3}. Let $y\in\mathcal{Y}$ satisfy (\ref{eq:notdense}). For each $J\in CO(y)$ let $\sigma_J$ be an irreducible representation of $A$ such that $\textrm{ker}(\sigma_J)=J.$  Let $\sigma=\oplus_{J\in CO(y)}\sigma_J.$  Since $CO(y)$ is not dense, we have
\begin{equation}
\textrm{ker}(\sigma)=\bigcap_{J\in CO(y)}J=\textrm{ker}(CO(y))\neq\{0\}. \label{eq:sigma} 
\end{equation}
Let $\{J_i\}_{i\in I}\subset O(y)$ be an at most countable subset such that
\begin{equation}
\textrm{ker}(\{J_i\}_{i\in I})=\textrm{ker}(O(y)). \label{eq:rhoker}
\end{equation}
For $i\in I$ let $\rho_i$ be an irreducible representation of $A$ such that $\textrm{ker}(\rho_i)=J_i.$
Let $\rho=\oplus_{i\in I}\rho_i.$ By (\ref{eq:sigma}) and (\ref{eq:rhoker}) we have
\begin{equation}
\textrm{ker}(\rho\oplus\sigma)=\textrm{ker}(O(y))\cap\Big(\bigcap_{J\in CO(y)} J\Big) 
=\bigcap_{J\in \textrm{Prim}(A)}J=\{0\}. \label{eq:rho+sig}
\end{equation}
By  definition (\ref{eq:Oydef}), for every $i\in I,$ we have $\V 1-\rho_i(y^*y)\V<1/4.$  Hence $\rho_i(y)$ is left invertible and $\rho_i(y^*y)$ is invertible.

We note that $\rho_i(y)$ is not right invertible.  Indeed, if $\rho_i(y)$ is right invertible, then there   is a unitary $u\in \rho_i(A)$ such that
$\rho_i(y)=u|\rho_i(y)|.$ Then by (\ref{eq:Oydef}) we have
\begin{equation*}
3/4<\V 1-\rho_i(yy^*)\V=\V1-u\rho_i(y^*y)u^*\V=\V 1-\rho_i(y^*y)\V<1/4,
\end{equation*}
a contradiction. 

For each $i\in I$, let
\begin{equation*}
z_i=\rho_i(y)(\rho_i(y^*y))^{-1/2}.
\end{equation*}
Then $z_i^*z_i=1,$ but $z_iz_i^*\neq1$ because $\rho_i(y)$ is not right invertible. Hence, $z_i\in\rho_i(A)$ is a proper isometry for each $i\in I.$
Define the continuous function $f:\mathbb{R}^+\rightarrow\mathbb{R}^+$ by
\begin{equation*}
f(t)=\left\{ \begin{array}{ll} \frac{8}{3\sqrt{3}}t&\quad\textrm{if }0\leq t\leq 3/4\\ t^{-1/2}&\quad\textrm{if }t>3/4 \end{array}\right.
\end{equation*}
Let $\widetilde{x}=yf(y^*y)\in A.$  Since $\textrm{sp}(\rho_i(y^*y))\subset[3/4,1]$,  it follows that
$\rho_i(\widetilde{x})=z_i$ for each $i\in I.$ Let $x\in A$ be norm 1 such that $\rho(x)=\rho(\widetilde{x})$ (such a lifting is always possible, see \cite[Remark 8.6]{Wassermann94}). 

Let $H_i$ denote the Hilbert space associated with $\rho_i.$ For each $i\in I,$ Lemma \ref{lem:unitarymult} provides a unitary 
$u_i\in B(H_i)$ such that 
\begin{equation}
(u_iz_i)^n(u_iz_i)^{*n}\rightarrow0\quad\textrm{strongly in }B(H_i),\textrm{ as }n\rightarrow\infty. \label{eq:uizi}
\end{equation}
Since each $\rho_i$ has a different kernel, they are mutually inequivalent. So,  by \cite[Theorem 3.8.11]{Pedersen79}
\begin{equation*}
 \rho(A)''=\prod_{i\in I}\rho_i(A)''=\prod_{i\in I}B(H_i).
\end{equation*}
Set $u=\oplus_{i\in I}u_i.$  Since $\rho_i(x)=z_i,$ by (\ref{eq:uizi}) we have
\begin{equation*}
(u\rho(x))^n(u\rho(x))^{*n}\rightarrow0\quad\textrm{strongly in} \prod_{i\in I}B(H_i).
\end{equation*}
By Kaplansky's density theorem (\cite[Theorem II.4.11]{Takesaki02}) there is a sequence $(u_k)$ of unitaries from $\rho(A)$ such that
$u_k\rightarrow u$ in the strong* topology.  From this we obtain sequences $(k_r)$ and $(n_r)$  such that
\begin{equation}
(u_{k_r}\rho(x))^{n_r} (u_{k_r}\rho(x))^{*n_r}\rightarrow0\quad\textrm{ strongly as }r\rightarrow\infty. \label{eq:krnr}
\end{equation}
For each $r\in \mathbb{N}$ let $x_r\in A$ be norm 1 such that $\rho(x_r)=u_{k_r}.$  By (\ref{eq:sigma}) and (\ref{eq:rho+sig}) we apply Theorem \ref{thm:T3.3} with the sequence $(x_rx)$ and deduce that $\mathcal{OL}_\infty(A)>\lambda'.$

We now return to the general case. Suppose that $\mathcal{OL}_\infty(A)<\lambda'.$

 Let $H$ be a separable, infinite dimensional Hilbert space.  Let $K$ denote the compact operators on $H$ and $K^1$ be the unitization of $K.$  Since $K^1$ is an AF algebra, $\mathcal{OL}_\infty(K^1)=1.$
By Proposition \ref{prop:tpOL}, $\mathcal{OL}_\infty(A\otimes K^1)\leq\mathcal{OL}_\infty(A)<\lambda'.$

By the above proof there is a subset $X\subset \textrm{Prim}(A\otimes K^1)$ with $\textrm{ker}(X)=\{0\}$ and $(A\otimes K^1)/J$ finite for each $J\in X.$ 

By \cite[IV.3.4.23]{Blackadar06}, 
\begin{equation*}
\textrm{Prim}(A\otimes K^1)=\{J\otimes K^1+A\otimes I:J\in \textrm{Prim}(A), I=\{0\}, K\}. 
\end{equation*}

 So, without loss of generality we may assume $X=\{J_i\otimes K^1\}_{i\in \mathcal{I}}$ with $J_i\in\textrm{Prim} (A).$
Since $K^1$ is nuclear, $(A\otimes K^1)/(J_i\otimes K^1)=(A/J_i)\otimes K^1,$ so $A/J_i$ is stably finite.  Furthermore, by the nuclearity of $K^1$, we have 
\begin{equation*}
\{0\}=\cap_{i\in \mathcal{I}} (J_i\otimes K^1)=(\cap_{i\in \mathcal{I}}J_i)\otimes K^1.
\end{equation*}
 So, $\textrm{ker}(\{J_i\}_{i\in I})=\{0\}.$ 

\end{proof}
We are now in a position to give a new class of examples of nuclear, quasidiagonal $C^*$-algebras $A$ with $\mathcal{OL}_\infty(A)>1.$

\begin{example} \label{ex:E3.6}\end{example}
Let $A$ be a unital nuclear $C^*$-algbera without a separating family of irreducible stably finite representations (in particular any non-finite nuclear, $C^*$-algebra). Let $C(A)^1=(C_0(0,1]\otimes A)^1$ be the unitization of the cone of $A$.  Since $A$ is nuclear, so is $C(A)^1.$  By 
\textup{\cite[Proposition 3]{Voiculescu91}} $C(A)^1$ is quasidiagonal.  For $t\in (0,1]$, let $I_t=\{f\in C_0(0,1]:f(t)=0\}.$
 By \cite[IV.3.4.23]{Blackadar06} every non-essential primitive ideal of $C(A)^1$ is of the form
\begin{equation*}
 I_t\otimes A+ C_0(0,1]\otimes J
\end{equation*}
for some $J\in \textrm{Prim}(A)$ and $0<t\leq1.$  Furthermore, by \cite[IV.3.4.22]{Blackadar06},
\begin{equation*}
 (C_0(0,1]\otimes A)/(I_t\otimes A+C_0(0,1]\otimes J)\cong A/J.
\end{equation*}
From this we deduce that $C(A)^1$ cannot have a separating family of irreducible, stably finite representations, hence $\mathcal{OL}_\infty(C(A)^1)>\lambda'$ by Theorem \ref{thm:sepfam}.
\section{Questions and Remarks}
Recall from the Introduction:
\begin{question} \label{ques:JOR} \textup{(\cite[Question 6.1]{Junge03})} If $\mathcal{OL}_\infty(A)=1$, is A a rigid $\mathcal{OL}_\infty$ space ?
\end{question}
Blackadar and Kirchberg showed \cite[Theorem 4.5]{Blackadar01} that a $C^*$-algebra $A$ is nuclear and inner quasidiagonal if and only if $A$ is a strong NF algebra (\cite[Definition 5.2.1]{Blackadar97}).  In \cite{Junge03} it was shown that $A$ is a strong NF algebra if and only if A is a rigid $\mathcal{OL}_\infty$ space. Furthermore, by \cite[Proposition 2.4]{Blackadar01} any $C^*$-algebra with a separating family of irreducible quasidiagonal representations is inner quasidiagonal.  

Therefore if there is a $C^*$-algebra $A$ with $\mathcal{OL}_\infty(A)=1$, but which is not a rigid $\mathcal{OL}_\infty$ space, then $A$ cannot have a separating family of irreducible, quasidiagonal representations, but $A$ must have a separating family of irreducible stably finite representations by Theorem \ref{thm:sepfam}.  
Let $A\subset B(H)$ from Example \ref{ex:E3.5}.  Then $A+K(H)$ is stably finite and prime, hence has a faithful stably finite representation.  On the other hand by \cite{Brown84}, the unique irreducible representation of $A+K(H)$ is not quasidiagonal. Hence, $A+K(H)$ is a possible counterexample to Question \ref{ques:JOR}.

Finally, recall the question raised by Blackadar and Kirchberg:
\begin{question} \label{ques:BK} \textup{(\cite[Question 7.4]{Blackadar97})} Is every nuclear stably finite $C^*$-algebra quasidiagonal? 
\end{question}
There are some interesting relationships between Question \ref{ques:BK} and $\mathcal{OL}_\infty$ strucutre.
\begin{proposition} Let A be either simple or both prime and antiliminal.  If
\begin{equation*}
 1<\mathcal{OL}_\infty(A)<\Big(\frac{1+\sqrt{5}}{2}\Big)^{1/2}
\end{equation*}
then $A$ is (nuclear) stably finite, but not quasidiagonal.
\end{proposition}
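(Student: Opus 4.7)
The proposition has three conclusions. Nuclearity of $A$ is immediate from $\mathcal{OL}_\infty(A)<\infty$ via Pisier's theorem recalled in the introduction. That $A$ is not quasidiagonal follows from the Blackadar--Kirchberg equivalence quoted in the introduction (for $A$ simple, or both prime and antiliminal, rigidity of the $\mathcal{OL}_\infty$-structure, $\mathcal{OL}_\infty(A)=1$, and nuclear-plus-QD are pairwise equivalent): since $\mathcal{OL}_\infty(A)>1$ and $A$ is nuclear, $A$ cannot be QD. The substance of the proposition is therefore stable finiteness.

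I plan to argue by contrapositive: if $A$ (simple or both prime and antiliminal) fails to be stably finite, then $\mathcal{OL}_\infty(A)\ge\sqrt{(1+\sqrt{5})/2}$. By Proposition \ref{prop:tpOL} and $\mathcal{OL}_\infty(M_n)=1$ one has $\mathcal{OL}_\infty(M_n(A))\le\mathcal{OL}_\infty(A)$, and $M_n(A)$ inherits simplicity (resp.\ primeness and antiliminality) from $A$; so I may replace $A$ by some $M_n(A)$ and assume a proper isometry $y\in A$ already exists, and set $p=1-yy^*\ne0$. Assume for contradiction $\mathcal{OL}_\infty(A)<\lambda<\sqrt{(1+\sqrt{5})/2}$, equivalently $\lambda\sqrt{\lambda^{2}-1}<1$.

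The plan is to run the scheme of Theorem \ref{thm:T3.3} with one crucial simplification enabled by the hypotheses on $A$. Apply a variant of Lemma \ref{lem:L2.1} with the finite set $F=\{y,y^{*},yy^{*},p\}$ to obtain a finite-dimensional $C^{*}$-algebra $B$, a ucp $\psi:A\to B$, and a unital self-adjoint $\vp:B\to A$ with $\psi\vp=\mathrm{id}_{B}$ and $\vp\psi|_{F}=\mathrm{id}_{F}$; perturb $\vp$ by Lemma \ref{lem:Kirchperb} to a ucp map $t:B\to B(H)$; and combine Choi's inequality (Lemma \ref{lem:L2.3}) with the finite-dimensional identity $\V 1-\psi(y)\psi(y)^{*}\V=\V 1-\psi(y)^{*}\psi(y)\V$ of (\ref{eq:stabfin}). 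Because $A$ is simple (resp.\ prime antiliminal), one can bypass the ``isolate $\rho$'' step that in Theorem \ref{thm:T3.3} required Lemma \ref{lem:L2.2}: there is no non-faithful subrepresentation $\sigma$ to carry along, and the witness $b=p$ is already positive in $A$ with $\vp\psi(b)=b$, so the matrix doubling of Lemma \ref{lem:x1} (which injects the factor $\sqrt{2}$ into Lemma \ref{lem:L2.1.1} and hence into the domain of Lemma \ref{lem:L2.1}) need not be invoked. The resulting chain $\V\vp\V_{cb}^{-1}\le\V\psi(p)\V\le\V 1-\psi(y)\psi(y)^{*}\V=\V 1-\psi(y)^{*}\psi(y)\V$ combined with the Choi--Schwarz comparison $t(\psi(y))^{*}t(\psi(y))\approx y^{*}y=1$ and the injectivity bound $\V t^{-1}\V_{cb}\le(2-\V\vp\V_{cb})^{-1}$ yields the quadratic inequality $\lambda^{2}(\lambda^{2}-1)\ge1$, i.e.\ $\lambda^{2}\ge(1+\sqrt{5})/2$, contradicting the choice of $\lambda$. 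The prime antiliminal case reduces to the simple case either directly, by constructing a non-faithful irreducible $\sigma$ inside the prime antiliminal structure, or by passing to $A\otimes K^{1}$ as in the proof of Theorem \ref{thm:sepfam}.

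The main obstacle is precisely this removal of the $\sqrt{2}$: one must run the entire ``bound $\V\vp\V_{cb}^{-1}$ from below and bound $\V 1-\psi(y)\psi(y)^{*}\V$ from above via $t$, relating them through (\ref{eq:stabfin})'' argument exclusively in the positive cone of $B$, so that the denominator $1-\lambda\sqrt{2(\lambda^{2}-1)}$ that forced $\lambda^{2}<(1+\sqrt{3})/2$ in Lemma \ref{lem:L2.1} is replaced by $1-\lambda\sqrt{\lambda^{2}-1}$, expanding the permissible range to $\lambda^{2}<(1+\sqrt{5})/2$.
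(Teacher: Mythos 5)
The paper's own proof of this proposition is a two-line citation: the ``not quasidiagonal'' half is \cite[Corollary 2.6]{Blackadar01} (the simple/prime-antiliminal equivalence quoted in the Introduction), and the ``stably finite'' half is \cite[Theorem 3.4]{Junge03}, which states precisely that $\mathcal{OL}_\infty(A)<((1+\sqrt{5})/2)^{1/2}$ forces stable finiteness. Your treatment of the quasidiagonality half agrees with the paper and is correct. For the stable-finiteness half you instead attempt to rederive the Junge--Ozawa--Ruan bound from the machinery of Sections 2--3, and this is where the proposal breaks down. Two smaller remarks first: the reduction to a proper isometry in the algebra itself via $\mathcal{OL}_\infty(M_n\otimes A)\le\mathcal{OL}_\infty(A)$ and Proposition \ref{prop:tpOL} is fine; but the reason you can dispense with the $\rho\oplus\sigma$ decomposition and Lemma \ref{lem:L2.2} is not simplicity or primeness of $A$ --- those hypotheses play no role in the stable-finiteness half --- it is that failure of stable finiteness hands you a proper isometry globally in some $M_n\otimes A$ rather than merely in a proper quotient.

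The genuine gap is the step you yourself label ``the main obstacle'': replacing the quantity $\lambda\sqrt{2(\lambda^2-1)}$ of Lemmas \ref{lem:Junge03.T3.2} and \ref{lem:L2.1} by $\lambda\sqrt{\lambda^2-1}$, which is exactly what would move the admissible range from $\lambda^2<(1+\sqrt{3})/2$ to $\lambda^2<(1+\sqrt{5})/2$. You assert that this follows from running the argument ``exclusively in the positive cone,'' but you do not prove it, and it is not evident: the factor $\sqrt{2}$ is inherited from the proof of \cite[Theorem 3.2]{Junge03} (a Cauchy--Schwarz-type estimate controlling $\V\psi\vp-id_B\V_{cb}$), whereas the positive-cone doubling device of Lemmas \ref{lem:x1} and \ref{lem:L2.1.1} governs a different loss, namely the passage from $r$ to $2r-1$. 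Moreover, even granting the improved constant $c=\lambda/(1-\lambda\sqrt{\lambda^2-1})$ in Lemma \ref{lem:L2.1}(i), the chain you describe still pays for the perturbation $\V t-\vp\V_{cb}\le c-1$ and the injectivity bound $\V t^{-1}\V_{cb}\le(2-c)^{-1}$, so what it actually yields is an inequality of the shape $c^{-1}\le(2-c)^{-1}(c^2-1)$, not the clean $\lambda^2(\lambda^2-1)\ge1$; the threshold you would extract is strictly worse than $((1+\sqrt{5})/2)^{1/2}$. As written, then, the stable-finiteness half is not established at the stated constant. The repair is simply to cite \cite[Theorem 3.4]{Junge03} for that half, as the paper does.
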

\begin{proof} This is \cite[Corollary 2.6]{Blackadar01} combined with \cite[Theorem 3.4]{Junge03} 
\end{proof}
 In light of Theorem \ref{thm:sepfam}, we have the following similar relationship:
\begin{proposition}
Let A be a $C^*$-algebra such that every primitive quotient is antiliminal.  If
\begin{equation*}
 1<\mathcal{OL}_\infty(A)<1.005
\end{equation*}
 then some quotient of $A$ is (nuclear) stably finite, but not quasidiagonal.
\end{proposition}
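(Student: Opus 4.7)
The plan is to use Theorem~\ref{thm:sepfam} to reduce to a family of primitive quotients of $A$, and then argue by contradiction using the Blackadar--Kirchberg equivalence recalled in the Introduction. Since $\mathcal{OL}_\infty(A)<1.005<\infty$, the algebra $A$ is nuclear, and Theorem~\ref{thm:sepfam} supplies a family $\{J_i\}_{i\in I}$ of primitive ideals with $\bigcap_i J_i=\{0\}$ and each $A/J_i$ stably finite. Every such $A/J_i$ is also nuclear (as a quotient of a nuclear algebra), prime (since $J_i$ is primitive), and antiliminal (by the standing hypothesis on $A$). The goal is to show that at least one of these quotients fails to be quasidiagonal; this quotient will be the desired nuclear stably finite non-QD quotient.

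Suppose for a contradiction that every $A/J_i$ is QD. Being nuclear, prime, and antiliminal, each $A/J_i$ then satisfies $\mathcal{OL}_\infty(A/J_i)=1$ by \cite[Proposition~2.5]{Blackadar01}; equivalently, $A/J_i$ is a rigid $\mathcal{OL}_\infty$ space, which by \cite[Theorem~4.5]{Blackadar01} together with \cite{Junge03} is the same as being nuclear and inner quasidiagonal. From the inner quasidiagonality of each $A/J_i$ I would then extract a family of irreducible quasidiagonal representations of $A/J_i$ whose kernels intersect to zero in $A/J_i$, and pull these back through the quotient map $A\twoheadrightarrow A/J_i$ to obtain irreducible QD representations of $A$ whose kernels intersect down to $J_i$.

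Assembling these representations over all $i\in I$ and using $\bigcap_i J_i=\{0\}$ yields a separating family of irreducible QD representations of $A$. By \cite[Proposition~2.4]{Blackadar01}, $A$ is then inner quasidiagonal; combining this with nuclearity and the chain of equivalences of \cite[Theorem~4.5]{Blackadar01} and \cite{Junge03} forces $\mathcal{OL}_\infty(A)=1$, contradicting $\mathcal{OL}_\infty(A)>1$. Hence some $A/J_i$ must be nuclear, stably finite, and not quasidiagonal, proving the proposition.

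The main obstacle I anticipate is the step from inner quasidiagonality of a nuclear, prime, antiliminal $A/J_i$ to the existence of a family of irreducible QD representations of $A/J_i$ whose kernels intersect to zero --- effectively a partial converse to \cite[Proposition~2.4]{Blackadar01} in this restricted setting. Care must also be taken to verify that the pulled-back irreducible QD representations really do assemble into a separating family of $A$ as $i$ varies over $I$; all other steps are immediate from results already cited in the paper.
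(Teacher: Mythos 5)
Your argument is exactly the one the paper intends: its one-line proof (``\cite[Corollary 2.6]{Blackadar01} combined with Theorem~\ref{thm:sepfam}'') unpacks to precisely your contradiction argument, applying the Blackadar--Kirchberg equivalences to each stably finite primitive quotient $A/J_i$ and reassembling a separating family of irreducible QD representations of $A$. The step you flag as the main obstacle is not actually a gap: for separable $C^*$-algebras \cite[Proposition~2.4]{Blackadar01} is an equivalence (inner quasidiagonality is \emph{characterized} by the existence of a separating family of irreducible quasidiagonal representations), so the passage from inner quasidiagonality of the prime, antiliminal quotients to such a family is exactly the content of the cited result.
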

\begin{proof}
This is \cite[Corollary 2.6]{Blackadar01} combined with Theorem \ref{thm:sepfam}. 
\end{proof}

A portion of the work for this paper was completed while the author took part in the Thematic Program on Operator Algebras at the Fields Institute in Toronto, ON in the Fall of 2007. I would like to thank Narutaka Ozawa for a helpful discussion about this work and my advisor Zhong-Jin Ruan for all his support.

\bibliographystyle{plain}
\bibliography{mybib}

\begin{thebibliography}{10}

\bibitem{Blackadar06}
B.~Blackadar.
\newblock {\em Operator algebras}, volume 122 of {\em Encyclopaedia of
  Mathematical Sciences}.
\newblock Springer-Verlag, Berlin, 2006.
\newblock Theory of $C\sp *$-algebras and von Neumann algebras, Operator
  Algebras and Non-commutative Geometry, III.

\bibitem{Blackadar97}
Bruce Blackadar and Eberhard Kirchberg.
\newblock Generalized inductive limits of finite-dimensional {C}*-algebras.
\newblock {\em Math. Ann.}, 307(3):343--380, 1997.

\bibitem{Blackadar01}
Bruce Blackadar and Eberhard Kirchberg.
\newblock Inner quasidiagonality and strong {N}{F} algebras.
\newblock {\em Pacific J. Math,}, 198(2):307--329, 2001.

\bibitem{Brown84}
L.~G. Brown.
\newblock The universal coefficient theorem for {${\rm Ext}$} and
  quasidiagonality.
\newblock In {\em Operator algebras and group representations, Vol. I (Neptun,
  1980)}, volume~17 of {\em Monogr. Stud. Math.}, pages 60--64. Pitman, Boston,
  MA, 1984.

\bibitem{Brown04}
Nathanial~P. Brown.
\newblock On quasidiagonal {$C\sp *$}-algebras.
\newblock In {\em Operator algebras and applications}, volume~38 of {\em Adv.
  Stud. Pure Math.}, pages 19--64. Math. Soc. Japan, Tokyo, 2004.

\bibitem{Choi74}
Man~Duen Choi.
\newblock A {S}chwarz inequality for positive linear maps on {$C\sp{\ast} \
  $}-algebras.
\newblock {\em Illinois J. Math.}, 18:565--574, 1974.

\bibitem{Davidson96}
Kenneth~R. Davidson.
\newblock {\em {C}*-algebras by example}.
\newblock Number~6 in Fields Institute Monographs. American Mathematical
  Society, Providence, R.I., 1996.

\bibitem{Junge04}
M.~Junge, N.~J. Nielsen, Zhong-Jin Ruan, and Q.~Xu.
\newblock {$\mathcal{COL}_p$} spaces---the local structure of non-commutative
  {$L\sb p$} spaces.
\newblock {\em Adv. Math.}, 187(2):257--319, 2004.

\bibitem{Junge03}
Marius Junge, Narutaka Ozawa, and Zhong-Jin Ruan.
\newblock On $\mathcal{OL}_\infty$ structures of nuclear ${C}^*$-algebras.
\newblock {\em Math. Ann.}, 325(3):449--483, 2003.

\bibitem{Paulsen02}
Vern Paulsen.
\newblock {\em Completely bounded maps and operator algebras}, volume~78 of
  {\em Cambridge Studies in Advanced Mathematics}.
\newblock Cambridge University Press, Cambridge, 2002.

\bibitem{Pedersen79}
Gert~K. Pedersen.
\newblock {\em ${C}^*$-algebras and their automorphism groups.}
\newblock Number~14 in London Mathematical Society Monographs. Academic Press
  Inc., London-New York, 1979.

\bibitem{Pisier96}
Gilles Pisier.
\newblock The operator {H}ilbert space {${\rm OH}$}, complex interpolation and
  tensor norms.
\newblock {\em Mem. Amer. Math. Soc.}, 122(585), 1996.

\bibitem{Pisier03}
Gilles Pisier.
\newblock {\em Introduction to operator space theory.}
\newblock Number 294 in London Mathematical Society Lecture Note Series.
  Cambridge University Press, Cambridge, 2003.

\bibitem{Smith83}
R.~R. Smith.
\newblock Completely bounded maps between {$C\sp{\ast} $}-algebras.
\newblock {\em J. London Math. Soc. (2)}, 27(1):157--166, 1983.

\bibitem{Takesaki02}
M.~Takesaki.
\newblock {\em Theory of operator algebras. {I}}, volume 124 of {\em
  Encyclopaedia of Mathematical Sciences}.
\newblock Springer-Verlag, Berlin, 2002.
\newblock Reprint of the first (1979) edition, Operator Algebras and
  Non-commutative Geometry, 5.

\bibitem{Voiculescu91}
Dan Voiculescu.
\newblock A note on quasi-diagonal {$C\sp *$}-algebras and homotopy.
\newblock {\em Duke Math. J.}, 62(2):267--271, 1991.

\bibitem{Wassermann94}
Simon Wassermann.
\newblock {\em Exact {C}*-Algebras and Related Topics}.
\newblock Number~19 in Lecture Note Series. Seoul National University, Research
  Institute of Mathematics, Global Analysis Research Center, Seoul, 1994.

\end{thebibliography}

\end{document}